\documentclass[preprint%
]{elsarticle}
\usepackage{algorithm, algorithmic}
\usepackage[latin1]{inputenc}
\usepackage{amsmath}\usepackage{amsthm}\usepackage{amssymb}\usepackage{mathrsfs}\usepackage{amscd}\usepackage{graphicx}\usepackage{subfigure}\usepackage{amsfonts}\usepackage{amsxtra}\usepackage{color}
\usepackage{multirow}
\usepackage{amscd}
\usepackage{setspace} 

\usepackage[T1]{fontenc}




\DeclareMathOperator{\FFP}{FFP}

\DeclareMathOperator{\Gram}{Gram}

\DeclareMathOperator{\trace}{trace}

\DeclareMathOperator{\Id}{Id}

\DeclareMathOperator{\GG}{\mathcal{G}}
\DeclareMathOperator{\FFF}{\mathcal{F}}
\DeclareMathOperator{\Pol}{Pol}
\DeclareMathOperator{\1}{\mathbf{1}}

\theoremstyle{definition}\newtheorem{definition}{Definition}[section]\newtheorem{example}[definition]{Example}\newtheorem{remark}[definition]{Remark}
\theoremstyle{plain}\newtheorem{theorem}[definition]{Theorem}\newtheorem{lemma}[definition]{Lemma}\newtheorem{corollary}[definition]{Corollary}\newtheorem{proposition}[definition]{Proposition}

\newcommand{\R}{\mathbb{R}}\newcommand{\C}{\mathbb{C}}

\newcommand{\Gkd}{{\mathcal G}_{k,d}}
\newcommand{\Gld}{{\mathcal G}_{\ell,d}}
\newcommand{ \God}{{\mathcal G}_{1,d}}




\journal{Applied and Computational Harmonic Analysis
}

\begin{document}

\begin{frontmatter}

\title{Tight $p$-fusion frames}

\author[a]{C.~Bachoc}
\ead{Christine.Bachoc@math.u-bordeaux1.fr}
\address[a]{Univ. Bordeaux, IMB, UMR 5251, F-33400 Talence, France}

\author[b]{M.~Ehler}
\ead{martin.ehler@helmholtz-muenchen.de}

\address[b]{Helmholtz Zentrum M\"unchen, 
Institute of Biomathematics and Biometry,  
Ingolst\"adter Landstrasse 1,  
85764 Neuherberg, Germany}

\begin{abstract}
Fusion frames enable signal decompositions into weighted linear subspace components.  
For positive integers $p$, we introduce $p$-fusion frames, a sharpening of the notion of fusion frames. 
Tight $p$-fusion frames are closely related to the classical notions of designs and cubature formulas in Grassmann spaces and are analyzed with methods from harmonic analysis in the Grassmannians. 
We define the $p$-fusion frame potential, derive bounds for its value, and discuss the connections to tight $p$-fusion frames.


\end{abstract}

\begin{keyword} fusion frame potential, Grassmann space, cubature formula, design, equiangular, simplex bound.
\end{keyword}


\end{frontmatter}

\section{Introduction}

In modern signal processing, basis-like systems are applied to derive stable and redundant signal representations. Frames are basis-like systems that span a vector space but allow for linear dependency, that can be used to reduce noise, find sparse representations, or obtain other desirable features unavailable with orthonormal bases \cite{Christensen:2003aa,Ehler:2010aa,Ehler:2010ac,Okoudjou:2010aa,Okoudjou:2010xx}. In fusion frame theory as introduced in \cite{Casazza:2008aa}, see also \cite{Bodmann:2007fk,Boufounos:2011uq,Calderbank:2010aa,Casazza:2011aa,Gavrutka}, the signal is projected onto a collection of linear subspaces that can represent, for instance, sensors in a network \cite{Rozell:2006uq} or nodes in a computing cluster \cite{Bjorstad:1991kx}. To obtain a signal reconstruction that is robust against noise and data loss, the subspaces are usually chosen in some redundant fashion and, as such, fusion frames are tightly connected to coding theory \cite{Bodmann:2007fk,Bodmann:2007kx}. 

Tight fusion frames \cite{Casazza:2008aa}  provide a direct reconstruction formula, and can be characterized as the minimizers of the fusion frame 
potential. The error caused from the loss of one or two subspaces  within a tight fusion frame is minimized for equidimensional subspaces that satisfy the simplex bound with equality \cite{Kutyniok:2009aa}. The simplex bound, as derived in 
\cite{Conway:1996aa}, is an extremal estimate on the maximum of the inner products $\langle P_V, P_W\rangle:=\trace(P_VP_W)$  between the projectors associated to equidimensional linear subspaces $V$ and $W$. Equality in this bound implies that the subspaces are equiangular, meaning that the inner products between distinct pairs take the same value. 

We derive a generalized simplex bound that also holds for subspaces whose dimensions can vary. Equality holds if and only if the fusion frame is tight and equiangular. 
In Section \ref{subsec:maximal}, we prove  that the number of equiangular subspaces in $\R^d$ cannot exceed $\binom{d+1}{2}$, generalizing Gerzon's bound for the maximal number of equiangular lines \cite{LemmensSeidel:1973aa}. 

The $p$-fusion frame potential of a collection of subspaces is introduced as an extension of the fusion frame potential discussed in \cite{Casazza:2009aa,Massey:2010fk},  as well as a convenient $\ell^p$ approximation 
of  the maximum among the inner products $\langle P_V, P_W\rangle$ of
pairwise distinct subspaces. We moreover derive a bound for the
$p$-fusion frame potential that yields the simplex bound at the
limit. 

We  introduce  the notions of $p$-fusion frames and tight $p$-fusion
frames, where $p\geq 1$ is an integer. These notions generalize the
notion of (tight)  fusion frames corresponding  to the case  $p=1$. For subspaces of equal
dimension, we apply methods  from harmonic
analysis on Grassmann spaces in order to analyse these objects. 
In particular we characterize tight $p$-fusion frames by the
evaluation of certain multivariate Jacobi polynomials at  the
principal angles between subspaces.
Moreover we relate  them to cubature formulas in Grassmann space.

A general framework for cubatures in polynomial spaces is proposed in \cite{Harpe:2005aa}. Designs for the Grassmann space, i.e., cubatures with constant weights, have been introduced and studied in  \cite{Bachoc:2005aa,Bachoc:2002aa}. 
We prove that cubatures of strength $2p$  can  be characterized as the
minimizers of the $p$-fusion frame potential. The notions of tight $p$-fusion frames and of cubatures
of strength $2p$ coincide for $p=1$; however the latter is stronger than the former for $p\geq 2$.

We verify the existence of tight $p$-fusion frames for any integer $p\geq 1$ by using results in \cite{Harpe:2005aa}.  Moreover, we present general constructions of tight $p$-fusion frames. One is based on orbits of finite subgroups of the orthogonal group and has  previously been used to derive designs in Grassmann spaces in \cite{Bachoc:2004aa}, see \cite{Vale:2005aa} for lines in $\R^d$. We also verify that $p$-designs in complex and quaternionic projective spaces in the sense of \cite{Hoggar:1982aa} induce tight $p$-fusion frames. Another construction of tight $p$-fusion frames is presented that is reminiscent to constructions by concatenation in
coding theory.

The outline is as follows: In Section \ref{sec:fff}, we list the basic properties of fusion frames. We recall the simplex bound in Section \ref{section:equal simplex} and derive the generalized simplex bound in Section \ref{subsection:finite p frames}. An upper bound on the number of equiangular subspaces is given in Section \ref{subsec:maximal}. We introduce $p$-fusion frames in Section \ref{section:p fusion}. The relations between tight $p$-fusion frames and cubatures of strength $2p$ are investigated in Section \ref{sec:equal dim}. Constructions of tight $p$-fusion frames are discussed in Section \ref{sec:construction}. Section \ref{sec:p pot} contains a  lower bound on the $p$-fusion frame potential that does not require the subspaces to be equidimensional.

\section{Fusion frames}\label{sec:fff}
To introduce fusion frames, we follow \cite{Casazza:2008aa}, see also \cite{Casazza:2009aa,Kutyniok:2009aa}. Given a linear subspace $V\subset\R^d$, let $P_V$ denote the orthogonal projection onto $V$. The \emph{real Grassmann space} $\mathcal{G}_{k,d}$ is the space of all $k$-dimensional subspaces of $\R^d$. Moreover $\GG_d:=\cup_{k=1}^{d-1} \Gkd$ is the union of all Grassmann spaces. 
\begin{definition}
Let $\{V_j\}_{j=1}^n\subset \GG_d$ and let $\{\omega_j\}_{j=1}^n$ be a collection of positive weights. Then $\{(V_j,\omega_j)\}_{j=1}^n$ is called a \emph{fusion frame} if there are positive constants $A$ and $B$ such that
\begin{equation}\label{eq:fusion def}
A\|x\|^2 \leq \sum_{j=1}^n \omega_j\|P_{V_j}(x)\|^2 \leq B \|x\|^2, \text{ for all $x\in\R^d$.}
\end{equation}
If $A=B$, then $\{(V_j,\omega_j)\}_{j=1}^n$ is called a \emph{tight fusion frame}. 
\end{definition} 
In case that $A=B$ and the weights are all equal to $1$, we simply say that $\{V_j\}_{j=1}^n$ is a tight fusion frame. Moreover, we shall always assume that the subspaces $V_j$ are non trivial, i.e., that $V_j\neq \{0\}$ and $V_j\neq \R^d$. 

The standard inner product between self-adjoint operators $P$ and $Q$ is defined by $\langle P,Q\rangle:=\trace(PQ)$. It should be noted that if 
$x$ belongs to the unit sphere $S^{d-1}=\{x\in \R^d : \Vert x\Vert=1\}$, then $\|P_V(x)\|^2=\langle P_{x},P_V\rangle$, where $P_x$ stands for the orthogonal projection onto the line $\R x$. Thus, the fusion frame condition \eqref{eq:fusion def} is equivalent to 
\begin{equation}\label{eq:tight fusion def}
A\leq \sum_{j=1}^n \omega_j\langle P_{x},P_{V_j}\rangle \leq B, \text{ for all $x\in S^{d-1}$.}
\end{equation}


Let us recall the significance of fusion frames for signal reconstruction: any finite collection $\{V_j\}_{j=1}^n$ of linear subspaces in $\R^d$ with positive weights $\omega=\{\omega_j\}_{j=1}^n$ induces an \emph{analysis operator} 
\begin{equation}\label{eq:analysis fp}
F : \R^d \rightarrow  \big(\bigoplus_{j=1}^n V_j\big)_\omega,\quad x \mapsto \{P_{V_j}(x)\}_{j=1}^n,
\end{equation}
where $\big(\bigoplus_{j=1}^n V_j\big)_\omega$ is the space $\bigoplus_{j=1}^n V_j$ endowed with the inner product \\ $\langle \{f_j\}_{j=1}^n , \{g_j\}_{j=1}^n\rangle _\omega := \sum_{j=1}^n \omega_j\langle f_j , g_j\rangle $. Its adjoint is the \emph{synthesis operator} 
\begin{equation}\label{eq:synthesis fp}
F^* : \big(\bigoplus_{j=1}^n V_j\big)_\omega \rightarrow  \R^d, \quad \{f_j\}_{j=1}^n \mapsto \sum_{j=1}^n   \omega_j f_j,
\end{equation}
and the \emph{fusion frame operator} is defined by
\begin{equation}\label{eq:frame operator discrete fusion frame}
S:= F^* F : \R^d \rightarrow \R^d,\quad  x \mapsto  \sum_{j=1}^n  \omega_j P_{V_j}(x).
\end{equation}
If $\{V_j\}_{j=1}^n$ forms a   fusion frame, then $S$ is positive, self-adjoint, invertible, and induces the reconstruction formula 
$ 
x = \sum_{j=1}^n  \omega_j S^{-1} P_{V_j}(x),\text{ for all $x\in\R^d$},
$ 
cf.~\cite{Casazza:2008aa}. If the  fusion frame is tight, then $S=AI_d$ holds, and we obtain 
the appealing representation
\begin{equation}\label{eq:tight reconstr}
x = \frac{1}{A}\sum_{j=1}^n  \omega_j P_{V_j}(x),\text{ for all $x\in\R^d$.}
\end{equation}

\section{The simplex bound and equiangular fusion frames}
Our goal in this section is to give lower bounds on $\max_{i\neq j} (\langle P_{V_i},P_{V_j}\rangle)$. 
We start to review the known results in the case of subspaces of equal dimension.
\subsection{The simplex bound for subspaces of equal dimension}\label{section:equal simplex}
The \emph{chordal distance} $d_c(V,W)$, for $V, W\in\mathcal{G}_{k,d}$,   was introduced in \cite{Conway:1996aa} and is defined by
\begin{equation}\label{eq:sin cos}
d^2_c(V,W) =\frac{1}{2}\|P_V-P_W\|^2_{\mathcal{F}}= k-\sum_{i=1}^k \cos^2(\theta_i(V,W))= k-\langle P_{V} ,P_{W}\rangle,
\end{equation}
where $\theta_1,\ldots,\theta_k$ are the \emph{principal angles} between $V$ and $W$, cf.~\cite{Golub:1996fk}. If $\{V_j\}_{j=1}^n\subset \mathcal{G}_{k,d}$, then the \emph{simplex bound} as derived in \cite{Conway:1996aa} yields
\begin{equation}\label{eq:simplex}
\min_{i\neq j} d_c^2(V_i,V_j) \leq \frac{k(d-k)}{d}\frac{n}{n-1},
\end{equation}
and equality requires $n\leq \binom{d+1}{2}$. Of course, in view of \eqref{eq:sin cos}, the above simplex bound is a lower bound for $\max_{i\neq j} (\langle P_{V_i},P_{V_j}\rangle)$.
The following  result is proven in \cite{Kutyniok:2009aa}:
\begin{theorem}[\cite{Kutyniok:2009aa}]\label{th:gittaSimplexBound}
If $\{V_j\}_{j=1}^n\subset \mathcal{G}_{k,d}$ is an equidistance fusion frame, i.e. if $d_c(V_i,V_j)$ is independent of $i\neq j$, then it is tight if and only if it satisfies the simplex bound \eqref{eq:simplex} with equality.
\end{theorem}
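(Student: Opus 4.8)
The plan is to express both the tightness condition and the simplex bound through a single Cauchy--Schwarz inequality applied to the eigenvalues of the fusion frame operator $S=\sum_{j=1}^n P_{V_j}$, the weights being all equal to $1$. The two quantities that drive the argument are the traces $\trace(S)$ and $\trace(S^2)$. Since each $P_{V_j}$ is a rank-$k$ orthogonal projection, $\trace(P_{V_j})=k$, hence $\trace(S)=nk$. For the second trace I would write
\begin{equation*}
\trace(S^2)=\langle S,S\rangle=\sum_{i,j=1}^n\langle P_{V_i},P_{V_j}\rangle=\sum_{i=1}^n\trace(P_{V_i})+\sum_{i\neq j}\langle P_{V_i},P_{V_j}\rangle.
\end{equation*}
The diagonal terms contribute $nk$. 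The equidistance hypothesis is exactly what makes the off-diagonal sum collapse: by \eqref{eq:sin cos} one has $\langle P_{V_i},P_{V_j}\rangle=k-d_c^2(V_i,V_j)=k-c$ for a constant $c$ independent of $i\neq j$, so that sum equals $n(n-1)(k-c)$.

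Next I would invoke the scalar inequality $\trace(S^2)\geq (\trace S)^2/d$, valid for any positive self-adjoint $S$ on $\R^d$ by Cauchy--Schwarz applied to its eigenvalues $\lambda_1,\dots,\lambda_d$, namely $\big(\sum_i\lambda_i\big)^2\leq d\sum_i\lambda_i^2$. Substituting the two computed traces gives
\begin{equation*}
nk+n(n-1)(k-c)\geq\frac{(nk)^2}{d},
\end{equation*}
and solving this inequality for $c$ is a routine rearrangement that I expect to reproduce exactly
\begin{equation*}
c=d_c^2(V_i,V_j)\leq\frac{k(d-k)}{d}\,\frac{n}{n-1},
\end{equation*}
which is the simplex bound \eqref{eq:simplex} in the equidistance case.

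The equivalence then follows from the equality case of Cauchy--Schwarz: equality in $\big(\sum_i\lambda_i\big)^2\leq d\sum_i\lambda_i^2$ holds precisely when all eigenvalues coincide, i.e.\ when $S=\lambda I_d$ is a multiple of the identity, which is exactly the assertion that the fusion frame is tight. Tracing this equality condition through the rearrangement shows that $S$ is a multiple of the identity if and only if the simplex bound is attained, delivering both directions simultaneously. The only real care needed is to verify that the algebraic rearrangement matches the stated constant $k(d-k)n/(d(n-1))$ line by line; the structural heart of the argument---identifying tightness with the Cauchy--Schwarz equality case---is immediate once the two traces are in hand.
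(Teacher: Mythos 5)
Your proof is correct and is essentially the same argument the paper relies on: the paper states this theorem as a citation to \cite{Kutyniok:2009aa}, but its own proof of the generalized simplex bound (Theorem \ref{prop:generalized simplex bound}) uses exactly your mechanism --- expanding $\trace(S^2)=\sum_{i,j}\langle P_{V_i},P_{V_j}\rangle$ and applying the Cauchy--Schwarz inequality to the eigenvalues of $S$, with equality precisely when $S$ is a multiple of the identity, i.e.\ the frame is tight. Your specialization is legitimate because the equidistance hypothesis makes the off-diagonal sum exactly $n(n-1)(k-c)$ (so the H\"older/equiangularity step of the general proof carries no slack), leaving Cauchy--Schwarz as the only inequality and hence giving both directions of the equivalence at once.
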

We shall extend the simplex bound \eqref{eq:simplex} and the above theorem to collections of weighted subspaces that do not all have the same dimension. 
\subsection{The simplex bound for subspaces of arbitrary dimension}\label{subsection:finite p frames}
When the subspaces do not have the same dimension, we replace the notion of subspaces being equidistant with the notion of 
\emph{equiangular subspaces}:
\begin{definition}
Let $\{V_j\}_{j=1}^n$ be a collection in $\mathcal{G}_d$ and let $\{\omega_j\}_{j=1}^n$ be positive weights. We then call both,  $\{(V_j,\omega_j)\}_{j=1}^n$ and $\{V_j\}_{j=1}^n$, \emph{equiangular} if  $\langle P_{V_i},P_{V_j}\rangle$ does not depend on $i\neq j$.  
\end{definition}
If all the subspaces $V_j$ are of dimension $1$, then our definition of $\{V_j\}_{j=1}^n$  being equiangular coincides with the classical notion of equiangular lines. 
If all subspaces $V_j$ are of dimension $k$, being equiangular amounts to being equidistant with respect to the chordal distance.
However, we remark that being equiangular in our sense does not mean that the $k$-tuples of principal angles between the pairs $(V_i,V_j)$ are the same (unless $k=1$).

The proof of the simplex bound \eqref{eq:simplex} in \cite{Conway:1996aa} heavily relies on the embedding of $\mathcal{G}_{k,d}$ into a higher dimensional sphere. For subspaces that are not equidimensional, we cannot use this embedding. Instead, we use the \emph{$p$-fusion frame potential}.
\begin{definition} \label{def:ffp}
The \emph{$p$-fusion frame potential} of the collection of weighted subspaces $\{(V_j,\omega_j)\}_{j=1}^n$ is defined
for $1\leq p<\infty$  by:
\begin{equation}\label{pFFP}
\FFP(\{(V_j,\omega_j)\}_{j=1}^n,p):=\sum_{i,j=1}^n \omega_i \omega_j\langle P_{V_i}, P_{V_j}\rangle^p.
\end{equation}
\end{definition}
Note that the $1$-fusion frame potential $\FFP(\{(V_j,\omega_j),1\}_{j=1}^n)=\trace(S^2)$, where $S$ is the fusion frame operator, has already been considered in \cite{Casazza:2009aa,Massey:2010fk}.

We can now derive a new weighted simplex bound for collections of subspaces that are not necessarily equidimensional. 
\begin{theorem}[Generalized Simplex Bound]\label{prop:generalized simplex bound}
Given positive weights $\{\omega_j\}_{j=1}^n$, if $\{V_j\}_{j=1}^n\subset \mathcal{G}_{d}$ and $m=\sum_{j=1}^n \omega_j\dim(V_j)$, then the following points hold:
\begin{itemize}
\item[\textnormal{1)}] For $1\leq p<\infty$,
\begin{equation}\label{eq:sharp equi distant}
\hspace{-.3cm}
\FFP(\{(V_j,\omega_j)\}_{j=1}^{n},p)\geq \frac{\big(\frac{m^2}{d}-\sum_{j=1}^n\omega_j^2\dim(V_j)\big)^p}{(\sum_{i\neq j}\omega_i\omega_j)^{p-1}}+\sum_{j=1}^n \omega^{2}_j\dim(V_j)^p \!.
\end{equation}
If $p=1$, then equality holds if and only if $\{(V_j,\omega_j)\}_{j=1}^n$ is a tight fusion frame. If $1<p<\infty$, then equality holds if and only if $\{(V_j,\omega_j)\}_{j=1}^n$ is an equiangular tight fusion frame. 
\item[\textnormal{2)}] 
\begin{equation}\label{eq:simplex for trace}
\max_{i\neq j} \langle P_{V_i},P_{V_j} \rangle \geq  \frac{\frac{m^2}{d}-\sum_{j=1}^n\omega_j^2\dim(V_j)}{\sum_{i\neq j} \omega_i\omega_j}.
\end{equation}
\end{itemize}
Equality holds if and only if $\{(V_j,\omega_j)\}_{j=1}^n$ is an equiangular tight fusion frame.
\end{theorem}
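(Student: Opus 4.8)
The plan is to split the $p$-fusion frame potential into its diagonal and off-diagonal parts and bound each separately. Since $P_{V_j}$ is an orthogonal projection, $\langle P_{V_j},P_{V_j}\rangle=\trace(P_{V_j}^2)=\trace(P_{V_j})=\dim(V_j)$, so the diagonal terms contribute exactly $\sum_{j=1}^n\omega_j^2\dim(V_j)^p$, which is precisely the last summand in \eqref{eq:sharp equi distant}. It therefore suffices to bound the off-diagonal sum $\sum_{i\neq j}\omega_i\omega_j\langle P_{V_i},P_{V_j}\rangle^p$ from below by the first summand. Throughout I would use that $\langle P_{V_i},P_{V_j}\rangle=\trace(P_{V_i}P_{V_j}P_{V_i})\geq 0$, so all quantities being raised to the power $p$ are non-negative.

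Two inequalities drive the bound. First, writing $W:=\sum_{i\neq j}\omega_i\omega_j$ and regarding $\{\omega_i\omega_j/W\}_{i\neq j}$ as a probability vector, convexity of $t\mapsto t^p$ on $[0,\infty)$ for $p\geq 1$ (Jensen's inequality) gives
\begin{equation*}
\sum_{i\neq j}\omega_i\omega_j\langle P_{V_i},P_{V_j}\rangle^p \;\geq\; \frac{\big(\sum_{i\neq j}\omega_i\omega_j\langle P_{V_i},P_{V_j}\rangle\big)^p}{W^{p-1}}.
\end{equation*}
Second, I would identify the inner sum with a trace of the fusion frame operator $S=\sum_j\omega_j P_{V_j}$: since $\trace(S^2)=\sum_{i,j}\omega_i\omega_j\langle P_{V_i},P_{V_j}\rangle$ and the diagonal part equals $\sum_j\omega_j^2\dim(V_j)$, one gets $\sum_{i\neq j}\omega_i\omega_j\langle P_{V_i},P_{V_j}\rangle=\trace(S^2)-\sum_j\omega_j^2\dim(V_j)$. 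Because $S$ is positive self-adjoint on $\R^d$ with $\trace(S)=\sum_j\omega_j\dim(V_j)=m$, Cauchy--Schwarz on its eigenvalues yields $\trace(S^2)\geq (\trace S)^2/d=m^2/d$, hence $\sum_{i\neq j}\omega_i\omega_j\langle P_{V_i},P_{V_j}\rangle\geq \frac{m^2}{d}-\sum_j\omega_j^2\dim(V_j)$.

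Combining the two displays, using monotonicity of $t\mapsto t^p$ (which is where one wants the quantity $\frac{m^2}{d}-\sum_j\omega_j^2\dim(V_j)$ to be non-negative, i.e. to be in the regime where an equiangular tight fusion frame can exist), proves \eqref{eq:sharp equi distant}. For the equality analysis I would track the two inequalities separately. The Cauchy--Schwarz step is an equality exactly when all eigenvalues of $S$ coincide, i.e. $S=\frac{m}{d}I_d$, which is the tight fusion frame condition. The Jensen step is an equality, for $p>1$ where $t\mapsto t^p$ is strictly convex, exactly when all $\langle P_{V_i},P_{V_j}\rangle$ with $i\neq j$ agree, i.e. the equiangular condition; for $p=1$ the function is linear and Jensen is an identity, so only tightness survives. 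This gives the stated dichotomy: tight for $p=1$, equiangular and tight for $1<p<\infty$.

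For part 2) I would argue directly rather than passing to the limit $p\to\infty$. With $M:=\max_{i\neq j}\langle P_{V_i},P_{V_j}\rangle$ one has $\sum_{i\neq j}\omega_i\omega_j\langle P_{V_i},P_{V_j}\rangle\leq M\,W$, and combining with the trace bound of the previous paragraph gives $M\geq \frac{1}{W}\big(\frac{m^2}{d}-\sum_j\omega_j^2\dim(V_j)\big)$, which is \eqref{eq:simplex for trace}. Equality forces both $\sum_{i\neq j}\omega_i\omega_j\langle P_{V_i},P_{V_j}\rangle=MW$ (so every off-diagonal inner product equals $M$: equiangular) and $\trace(S^2)=m^2/d$ (tight), i.e. an equiangular tight fusion frame. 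The main obstacle is not any single inequality — both Jensen and the spectral bound are routine — but the bookkeeping of the equality cases and, relatedly, the sign hypothesis $\frac{m^2}{d}\geq\sum_j\omega_j^2\dim(V_j)$ under which the first summand of \eqref{eq:sharp equi distant} is a legitimate, non-trivial lower bound.
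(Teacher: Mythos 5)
Your proposal is correct and takes essentially the same route as the paper's proof: your Jensen step is the paper's H\"older inequality in disguise, and the decomposition into diagonal and off-diagonal parts, the identification of the off-diagonal sum with $\trace(S^2)-\sum_j\omega_j^2\dim(V_j)$, the Cauchy--Schwarz bound $\trace(S^2)\geq m^2/d$ on the eigenvalues of $S$, and the equality bookkeeping (Cauchy--Schwarz equality $\Leftrightarrow$ tight, strict convexity/H\"older equality for $p>1$ $\Leftrightarrow$ equiangular, part 2) as the $p=\infty$ endpoint) all match. The sign caveat you flag for $\frac{m^2}{d}-\sum_j\omega_j^2\dim(V_j)$ is a genuine subtlety, but it is equally implicit in the paper's argument, so it does not distinguish your proof from theirs.
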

\begin{proof}
Part 1) for $p=1$ has already been derived in \cite{Casazza:2009aa,Massey:2010fk}. For $1<p<\infty$, we take the $p$-th root and only consider the terms $i\neq j$. We then see that \eqref{eq:sharp equi distant} is equivalent to 
\begin{equation*}
\|(\omega^{1/p}_i\omega_j^{1/p}\langle  P_{V_i},P_{V_j} \rangle)_{i\neq j}\|_{\ell^p} \geq \big(\frac{m^2}{d}-\sum_{j=1}^n\omega_j^2\dim(V_j)\big) \big(\sum_{i\neq j} \omega_i\omega_j\big)^{1/p-1},
\end{equation*}
so that \eqref{eq:simplex for trace} complements the estimate on $\|(\omega_i^{1/p}\omega_j^{1/p}\langle  P_{V_i},P_{V_j} \rangle)_{i\neq j}\|_{\ell^p}$ in \eqref{eq:sharp equi distant} for $p=\infty$.  

By applying the H\"older inequality, we obtain, for $1<p\leq \infty$ and $1=\frac{1}{p}+\frac{1}{q}$,
\begin{align}
  \|(\omega^{\frac{1}{p}}_i\omega_j^{\frac{1}{p}}\langle  P_{V_i},P_{V_j} \rangle)_{i\neq j}\|_{\ell^p} \|(\omega^{\frac{1}{q}}_i \omega^{\frac{1}{q}}_j)_{i\neq j}\|_{\ell^q}
&   \ge \sum_{i \ne j} \omega_i\omega_j\langle P_{V_i}, P_{V_j} \rangle \label{eq:first inequality in proof} \\
& = \trace\big(\big(\sum_{i=1}^n \omega_iP_{V_i}\big)^2 \big) -\sum_j w_j^2 \dim V_j\nonumber 
\intertext{If $\{\lambda_k\}_{k=1}^d$ are the eigenvalues of $S=\sum_{i=1}^n \omega_iP_{V_i}$, then we further obtain}
\trace\big(\big(\sum_{i=1}^n \omega_iP_{V_i}\big)^2 \big) -\sum_j w_j^2 \dim V_j& = \sum_{k=1}^d\lambda_k^2 -\sum_j w_j^2 \dim V_j\nonumber \\
&\geq \frac{1}{d}\big(\sum_{k=1}^d \lambda_k\big)^2- \sum_j w_j^2 \dim V_j.\nonumber
\end{align}
In the last step we have used the Cauchy-Schwartz inequality for $(\lambda_k)_{k=1}^d$ and the constant sequence. 
The inequality \eqref{eq:first inequality in proof} turns into an equality if and only if $\{V_j\}_{j=1}^n$ are equiangular. The Cauchy-Schwartz inequality turns into an equality if and only if $S$ is a multiple of the identity.
\end{proof}
By applying \eqref{eq:sin cos}, we observe that \eqref{eq:simplex for trace} is equivalent to the simplex bound \eqref{eq:simplex} if all the subspaces have the same dimension and the weights are constant. 

\begin{corollary}\label{corol:1/n}
If $\{V_j\}_{j=1}^n\subset \mathcal{G}_{k,d}$ and $\{\omega_j\}_{j=1}^n$ are positive weights, 
then $\{(V_j,\omega_j)\}_{j=1}^n$ is an equiangular tight fusion frame if and only if the weights are constant 
and $\langle P_{V_i},P_{V_j}\rangle = \frac{k(nk-d)}{(n-1)d}$, for all $i\neq j$.
\end{corollary}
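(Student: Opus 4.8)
The plan is to establish both implications by elementary trace identities, the engine being the pairing of the frame operator $S=\sum_{j=1}^n\omega_j P_{V_j}$ with the individual projectors $P_{V_i}$. For the forward implication, suppose $\{(V_j,\omega_j)\}_{j=1}^n$ is an equiangular tight fusion frame; write $S=A\,I_d$ and set $c:=\langle P_{V_i},P_{V_j}\rangle$, which is independent of $i\neq j$ by equiangularity. Taking the trace of $S=A\,I_d$ and using $\trace(P_{V_j})=k$ gives $A=m/d$ with $m=k\sum_j\omega_j$. The decisive step is to compute $\trace(SP_{V_i})$ in two ways: on one hand it equals $A\trace(P_{V_i})=Ak$; on the other, expanding $S$ and using $\langle P_{V_i},P_{V_i}\rangle=k$ together with equiangularity, it equals $\omega_i k+c\sum_{j\neq i}\omega_j=\omega_i(k-c)+c\sum_j\omega_j$. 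Since $Ak$ and $c\sum_j\omega_j$ do not depend on $i$, the quantity $\omega_i(k-c)$ must be constant in $i$.

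To pass from this to constant weights I need $c\neq k$, and this is the only delicate point. By the Cauchy--Schwarz inequality for the Frobenius inner product, $c=\langle P_{V_i},P_{V_j}\rangle\le\|P_{V_i}\|_{\mathcal F}\|P_{V_j}\|_{\mathcal F}=k$, with equality precisely when $V_i=V_j$; were $c=k$, equiangularity would force all the $V_j$ to coincide, so $S$ would be a positive multiple of a single rank-$k$ projector, incompatible with $S=A\,I_d$ since $k<d$. Hence $c<k$, the weights are constant, and substituting the common weight into the identity $\omega_i(k-c)+c\sum_j\omega_j=Ak$ (equivalently $k+c(n-1)=k^2n/d$ after dividing out the common weight) solves for $c=\frac{k(nk-d)}{(n-1)d}$, as claimed.

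For the converse, assume the weights are equal and $\langle P_{V_i},P_{V_j}\rangle=\frac{k(nk-d)}{(n-1)d}$ for all $i\neq j$; equiangularity is then immediate, and since $S$ is a scalar multiple of $T:=\sum_j P_{V_j}$ it suffices to show that $T$ is a multiple of $I_d$. Here $\trace(T)=nk$, and inserting the prescribed value of $c$ into $\trace(T^2)=nk+n(n-1)c$ yields $\trace(T^2)=(nk)^2/d=(\trace T)^2/d$. Applying Cauchy--Schwarz to the eigenvalues of $T$ against the constant sequence shows this is exactly the equality case, so all eigenvalues of $T$ coincide; thus $T=\tfrac{nk}{d}I_d$ and the collection is a tight fusion frame. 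The main obstacle throughout is not the computation but ruling out the degenerate coincidence $c=k$, which is what makes ``constant weights'' a genuine conclusion rather than a hypothesis.
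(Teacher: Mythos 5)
Your proof is correct, but it takes a genuinely different route from the paper's. The paper obtains the corollary as a consequence of Theorem \ref{prop:generalized simplex bound}: after normalizing $\sum_{j}\omega_j=1$, the right-hand side of \eqref{eq:simplex for trace} becomes $k\bigl(\frac{k/d-1}{1-\sum_j\omega_j^2}+1\bigr)$, a function of the weights that is maximized precisely at $\omega_j=1/n$ (where its value is $\frac{k(nk-d)}{(n-1)d}$); combining the equality case of that bound for the given weights with the bound itself applied at uniform weights then forces both constant weights and the stated common value of $\langle P_{V_i},P_{V_j}\rangle$. You instead argue directly and self-containedly: computing $\trace(SP_{V_i})$ two ways yields $\omega_i(k-c)=\text{const}$, the degenerate case $c=k$ is excluded by the equality case of Cauchy--Schwarz together with $k<d$ (so tightness cannot hold when all subspaces coincide), and the converse is checked via $\trace(T^2)=(\trace T)^2/d$ and the eigenvalue equality case --- which is, incidentally, the very Cauchy--Schwarz step used inside the proof of Theorem \ref{prop:generalized simplex bound}, so your converse quietly re-proves the relevant fragment of that theorem rather than citing it. What your approach buys is transparency and independence from the simplex-bound machinery: it isolates the one delicate point (ruling out $c=k$, i.e., coincident subspaces) that the paper's terse optimization argument absorbs into the equality characterization of its bound; what the paper's approach buys is brevity and uniformity, since the same equality case also covers the non-equidimensional setting. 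One cosmetic remark: like the paper, you implicitly assume $n\geq 2$ --- for $n=1$ the displayed value of $c$ is undefined and tightness is impossible anyway since $k<d$ --- which is harmless but worth stating.
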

\begin{proof}
Without loss of generality, we can assume that $\sum_{j=1}^n \omega_j=1$. For $\dim(V_j)=k$, $j=1,\ldots,n$, the right-hand side of \eqref{eq:simplex for trace} equals $k(\frac{\frac{k}{d}-1}{1-\sum_{j=1}^n \omega^2_j}+1)$ and is maximized 
if and only if $\omega_j=1/n$, $j=1,\ldots,n$. By applying Theorem \ref{prop:generalized simplex bound}, we can conclude the proof.  
\end{proof}

\subsection{The maximal number of equiangular subspaces}\label{subsec:maximal}
To match the generalized simplex bound of Theorem \ref{prop:generalized simplex bound} with equality, the subspaces need to be equiangular. It is natural to ask how large a collection of equiangular subspaces can be.
The classical Gerzon upper bound  $n\leq \binom{d+1}{2}$ for equiangular lines \cite{LemmensSeidel:1973aa} was extended to equiangular 
subspaces of equal dimension $k$ in \cite[Theorem 3.6]{Bachoc:2004aa}. In the present section, we prove that this upper bound extends further to equiangular subspaces of arbitrary dimensions. 
\begin{theorem}\label{theorem:equiangular general}
If $\{V_j\}_{j=1}^n$ is a collection of equiangular pairwise distinct subspaces in $\mathcal{G}_{d}$, then $n\leq \binom{d+1}{2}$. 
\end{theorem}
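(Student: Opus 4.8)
The plan is to realize the subspaces through their orthogonal projectors $P_{V_1},\dots,P_{V_n}$ as vectors in the $\binom{d+1}{2}$-dimensional real vector space of symmetric $d\times d$ matrices, equipped with the inner product $\langle A,B\rangle=\trace(AB)$, and to show that they are linearly independent. Since a linearly independent set in a space of dimension $\binom{d+1}{2}$ has at most $\binom{d+1}{2}$ elements, this immediately yields the bound (the case $n=1$ being trivial). Linear independence is equivalent to nonsingularity of the Gram matrix $G=\big(\langle P_{V_i},P_{V_j}\rangle\big)_{i,j=1}^n$, so the whole argument reduces to analysing this matrix.

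First I would compute $G$ explicitly. The diagonal entries are $\langle P_{V_j},P_{V_j}\rangle=\trace(P_{V_j}^2)=\trace(P_{V_j})=\dim(V_j)=:k_j$, because projectors are idempotent; by the equiangularity hypothesis every off-diagonal entry equals a common constant $c$. Writing $D=\diag(k_1,\dots,k_n)$ and letting $J$ be the all-ones matrix, this gives $G=(D-cI)+cJ$. Two elementary facts pin down $c$: it is nonnegative, since $\langle P_{V_i},P_{V_j}\rangle=\sum_l\cos^2\theta_l\geq 0$, and it satisfies $c=\langle P_{V_i},P_{V_j}\rangle\leq\min(\dim V_i,\dim V_j)$ for $i\neq j$, because the number of principal angles between two subspaces is the smaller of their dimensions; in particular $c\leq k_l$ for every $l$. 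Hence $D-cI$ and $cJ$ are both positive semidefinite, so $G$ is positive semidefinite, and it remains only to rule out a nontrivial kernel.

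A vector $v$ in the kernel of $G$ forces $v^\top G v=\sum_j(k_j-c)v_j^2+c\big(\sum_j v_j\big)^2=0$, so, when $c>0$, $v$ must be supported on the index set $T=\{j:k_j=c\}$ and satisfy $\sum_{j\in T}v_j=0$; when $c=0$ one has $G=D\succ 0$ directly. Thus a nontrivial kernel can exist only if $|T|\geq 2$. The hard part, and the genuinely new point compared with the equidimensional Gerzon bound, is to exclude this degenerate case, which is precisely where subspace containment could make the naive Gram argument fail. I would argue as follows: if $i,j\in T$ are distinct, then $V_i$ and $V_j$ have the same dimension $c$ and $\langle P_{V_i},P_{V_j}\rangle=c=\dim V_i$; for equidimensional subspaces, equality in $\langle P_{V_i},P_{V_j}\rangle=\sum_l\cos^2\theta_l\leq\dim V_i$ forces every principal angle to vanish, i.e. $V_i=V_j$, contradicting that the subspaces are pairwise distinct. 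Therefore $|T|\leq 1$, the kernel of $G$ is trivial, the projectors are linearly independent, and $n\leq\binom{d+1}{2}$.
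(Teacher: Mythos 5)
Your proof is correct, and it streamlines the paper's argument in a worthwhile way. The paper uses the same embedding of the projectors into the $\binom{d+1}{2}$-dimensional space of symmetric matrices, but it splits into two cases: if $\alpha:=\langle P_{V_i},P_{V_j}\rangle$ equals $\dim(V_i)$ for some $i$, it argues geometrically that $V_i$ is contained in every other subspace, that the orthogonal complements of $V_i$ inside the $V_j$ are pairwise orthogonal and nonzero, and hence that $n-1\leq d$ (a stronger bound in that case); only in the complementary case $\alpha<\dim(V_j)$ for all $j$ does it invoke the Gram matrix, asserting full rank ``by induction and elimination'' without detail. You dispense with the case split entirely: the decomposition $G=(D-cI)+cJ$ with $c\geq 0$ and $c\leq k_l$ for all $l$ shows $G\succeq 0$ in all cases, and your kernel analysis reduces everything to ruling out $|T|\geq 2$, which is exactly the same geometric equality fact (equality in $\langle P_{V_i},P_{V_j}\rangle\leq\min(k_i,k_j)$ forces containment, hence equality for equal dimensions) that the paper uses to launch its Case 1. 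Notably, your argument shows the Gram matrix is nonsingular even when $|T|=1$, i.e.\ even in the paper's containment case, so the case distinction is genuinely unnecessary for the stated bound; what the paper's Case 1 buys instead is the sharper structural conclusion ($V_1\subset V_j$ for all $j$, orthogonal complements, $n\leq d+1$), which your uniform treatment does not recover. Your proof is also more complete where the paper is sketchiest, since the quadratic-form computation $v^\top Gv=\sum_j(k_j-c)v_j^2+c\bigl(\sum_j v_j\bigr)^2$ replaces the unproved full-rank claim with an explicit verification, and your handling of the edge cases ($n=1$, $c=0$ using nontriviality of the subspaces so $k_j\geq 1$) is careful and correct.
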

\begin{proof}
Let $ \alpha=\langle P_{V_i},P_{V_j}\rangle$, for all $i\neq j$. We split the proof into two cases. 

Case 1) Suppose that there exists $i$ such that $\alpha=\dim(V_{i})$. Without loss of generality, we assume that $i=1$. A short computation yields that $ \langle P_{V_{1}},P_{V}\rangle \leq \dim(V_{1})$, for all $V\in\mathcal{G}_d$, and equality holds if and only if $V_{1}$ is contained in $V$. Thus, we have $V_1\subset V_j$, for all $j=2,\ldots,n$. Let $W_j$ be the orthogonal complement of $V_{1}$ in $V_j$, i.e., $V_j=V_{1}\oplus W_j$, for all $j=2,\ldots,n$. It can be checked that the equiangularity implies that the collection $\{W_j\}_{j=2}^n$ is pairwise orthogonal. Since $\{V_j\}_{j=1}^n$ are pairwise distinct, none of the $\{W_j\}_{j=2}^n$ can be zero. Thus, $n-1\leq d$ must hold, which implies $n\leq \binom{d+1}{2}$, for $d\geq 2$.

Case 2) We can now suppose that $\alpha< \dim(V_i)=:k_i$, for all $i=1,\ldots,n$. Let us define the matrix $\Gram:=(\langle P_{V_{i}},P_{V_j}\rangle )_{i,j}\in\R^{n\times n}$, so that
\begin{equation*}
\Gram = \begin{pmatrix}
k_1 & \alpha &\cdots &\alpha\\
\alpha &\ddots & & \vdots\\
\vdots & & \ddots &\alpha\\
\alpha & \cdots &\alpha& k_n
\end{pmatrix}.
\end{equation*}
We can check by induction and elimination that $\Gram$ has full rank. Therefore, $\{P_{V_j}\}_{j=1}^n$ is linearly independent. Since the real vector space of self-adjoint matrices is $\binom{d+1}{2}$-dimensional, we must have $n\leq \binom{d+1}{2}$. 
\end{proof}

The following examples form 
equiangular subspaces:
\begin{example}\label{example:equiangular}
\begin{itemize}
\item[1)] A collection of $10$ two-dimensional subspaces of $\R^4$ was constructed in \cite{Conway:1996aa} that match the simplex bound. 
\item[2)] Let $d$ be a prime which is either $3$ or congruent to $-1$ modulo $8$. A collection of $\binom{d+1}{2}$ subspaces in $\mathcal{G}_{\frac{d-1}{2},d}$ satisfying the simplex bound was constructed in \cite{Calderbank:1999uq}.
\item[3)] In \cite{Creignou:2008aa}, codes in Grassmann spaces were constructed from $2$-transitive  groups. 
By construction, these codes are equiangular.
\end{itemize}
\end{example}

\section{Tight $p$-fusion frames}\label{section:p fusion}
The notion of (tight) fusion frames generalizes in a natural way when squares are replaced by $2p$-powers for $p$ a positive integer:
\begin{definition}
Let $\{V_j\}_{j=1}^n$ be a collection of linear subspaces in $\R^d$ and let $\{\omega_j\}_{j=1}^n$ be a collection of positive weights. Then $\{(V_j,\omega_j)\}_{j=1}^n$ is called a \emph{$p$-fusion frame} if there exist constants $A,B>0$ such that
\begin{equation}\label{eq:p fusion def}
A\|x\|^{2p}\leq \sum_{j=1}^n \omega_j\|P_{V_j}(x)\|^{2p} \leq B\|x\|^{2p}, \text{ for all $x\in\R^d$.}
\end{equation}
If the weights are all equal to $1$, then we suppress them in our notation and simply write $\{V_j\}_{j=1}^n$ for the $p$-fusion frame. If $A=B$, then $\{(V_j,\omega_j)\}_{j=1}^n$ is called a \emph{tight $p$-fusion frame}. If, in addition, all the subspaces are one-dimensional, then $\{(V_j,\omega_j)\}_{j=1}^n$ is simply called a \emph{tight $p$-frame}. 
\end{definition} 
Of course, tight $1$-fusion frames are tight fusion frames. Also, it is clear from the definition that the union of tight $p$-fusion frames is again a tight $p$-fusion frame.  

Now we show that, for a tight $p$-fusion frame, the value of $A=B$ is uniquely determined. The real Grassmann space $\mathcal{G}_{k,d}$ 
is endowed with the transitive action of the real orthogonal group $O(\R^d)$. The Haar measure on $O(\R^d)$ induces a measure $\sigma_k$ on the Grassmann space $\mathcal{G}_{k,d}$, that we assume to be normalized, i.e. $\sigma_k(\mathcal{G}_{k,d})=1$. Because these measures are 
$O(\R^d)$-invariant, the integral $\int_{\Gkd} \langle P_x,P_V\rangle^{p}d\sigma_k(V)$ does not depend on the choice of $x\in \R^d$, and similarly,  $\int_{\God} \langle P_x,P_V\rangle^{p}d\sigma_1(\R x)$  is independent of $V\in \Gkd$. Therefore, we can define the value $T_{1,k,d}(p)$ by 
\begin{equation}\label{eq:T1kd}
T_{1,k,d}(p):=\int_{\God}\int_{\Gkd} \langle P_x,P_V\rangle^{p}d\sigma_k(V) d\sigma_1(x)=\int_{\Gkd} \langle P_x,P_V\rangle^{p}d\sigma_k(V)=\int_{\God} \langle P_x,P_V\rangle^{p}d\sigma_1(x).
\end{equation}
The defining property of a $p$-fusion frame can be rephrased in the following way:
\begin{equation}\label{eq:p fusion def2}
A\leq \sum_{j=1}^n \omega_j\langle P_x, P_{V_j}\rangle^{p} \leq B, \text{ for all $x\in S^{d-1}$.}
\end{equation}
Integrating \eqref{eq:p fusion def2} over $\R x\in \God$ and using \eqref{eq:T1kd} lead to
\begin{equation}\label{eq:specification of A}
A\leq \sum_{k=1}^{d-1} m_k T_{1,k,d}(p)\leq B,
\end{equation}
where $m_k = \sum_{\dim(V_j)=k} \omega_j$. Equality holds for tight $p$-fusion frames. Since $T_{1,k,d}(1)=\frac{k}{d}$, cf.~\cite{James:1974aa}, the frame bounds of a fusion frame satisfy $A\leq \frac{m}{d}\leq B$, where $m=\sum_{j=1}^n \omega_j\dim(V_j)$.

It should also be mentioned that reweighting of a tight $p$-fusion frame leads to tight $p'$-fusion frames for the entire range $1\leq p'\leq p$:
\begin{theorem}\label{th:tight p for p'}
Let $p\geq 2$. If $\{(V_j,\omega_j)\}_{j=1}^n$ is a tight $p$-fusion frame, then $\{(V_j,\tilde{\omega}_j)\}_{j=1}^n$ is a tight $(p-1)$-fusion frame, where $\tilde{\omega}_j=\omega_j(p-1+\dim(V_j)/2)$.  
\end{theorem}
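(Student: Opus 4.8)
The plan is to read the tightness of a $p$-fusion frame as a \emph{polynomial} identity and then apply the Laplace operator $\Delta=\sum_a\partial_{x_a}^2$ on $\R^d$ to descend from degree $p$ to degree $p-1$. Writing $q_j(x):=\|P_{V_j}(x)\|^2=x^{t}P_{V_j}x$, the reformulation \eqref{eq:p fusion def2} says that $\sum_{j=1}^n\omega_j q_j(x)^{p}$ equals the constant $A$ on the sphere $S^{d-1}$. Each $q_j$ is a quadratic form, so $\sum_j\omega_j q_j^p$ is homogeneous of degree $2p$; being constant on $S^{d-1}$, it must agree with $A\|x\|^{2p}$ everywhere. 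Thus the tightness hypothesis is equivalent to the identity $\sum_{j=1}^n\omega_j q_j(x)^{p}=A\|x\|^{2p}$ of polynomials on $\R^d$, which may now be differentiated.

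First I would record two elementary facts about $q_j$. Since $P_{V_j}$ is a self-adjoint idempotent, $\nabla q_j=2P_{V_j}x$, whence $\Delta q_j=2\trace(P_{V_j})=2\dim(V_j)$ and $\|\nabla q_j\|^2=4\|P_{V_j}x\|^2=4q_j$. The chain rule then gives
\[
\Delta\big(q_j^{p}\big)=p(p-1)q_j^{p-2}\|\nabla q_j\|^2+p\,q_j^{p-1}\Delta q_j=2p\big(2(p-1)+\dim(V_j)\big)q_j^{p-1}.
\]
The decisive observation is that the emerging factor is exactly the prescribed reweighting: $2(p-1)+\dim(V_j)=2\big(p-1+\dim(V_j)/2\big)=2\tilde\omega_j/\omega_j$, so that $\sum_j\omega_j\Delta(q_j^p)=4p\sum_j\tilde\omega_j q_j^{p-1}$.

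For the right-hand side I would use the standard radial formula $\Delta\big(\|x\|^{s}\big)=s(s+d-2)\|x\|^{s-2}$ with $s=2p$, giving $\Delta\big(A\|x\|^{2p}\big)=2p(2p+d-2)A\|x\|^{2(p-1)}$. Applying $\Delta$ to both sides of the polynomial identity above and cancelling the common factor $4p$ yields $\sum_{j=1}^n\tilde\omega_j q_j(x)^{p-1}=\tfrac{2p+d-2}{2}A\,\|x\|^{2(p-1)}$ on all of $\R^d$. Restricting to $S^{d-1}$ and recalling that $q_j(x)=\langle P_x,P_{V_j}\rangle$ there, this is precisely condition \eqref{eq:p fusion def2} for the collection $\{(V_j,\tilde\omega_j)\}_{j=1}^n$ at exponent $p-1$ with equal bounds $A'=B'=\tfrac{2p+d-2}{2}A$; hence it is a tight $(p-1)$-fusion frame, as claimed.

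There is no real obstacle beyond choosing the right tool: once one differentiates, the work reduces to the two one-line computations of $\Delta q_j$ and $\|\nabla q_j\|^2$ and to matching the constant $2(p-1)+\dim(V_j)$ against the weights $\tilde\omega_j$. The only points demanding a little care are the homogeneity argument that promotes the spherical identity to a genuine polynomial identity before differentiating, and correctly invoking $\Delta\|x\|^{s}=s(s+d-2)\|x\|^{s-2}$ in dimension $d$.
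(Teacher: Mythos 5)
Your proof is correct and follows essentially the same route as the paper: both read tightness as the polynomial identity $\sum_j \omega_j\|P_{V_j}(x)\|^{2p}=A\|x\|^{2p}$ on $\R^d$ and apply the Laplacian, using $\Delta\big(\|P_V(x)\|^{2p}\big)=4p\big(p-1+\dim(V)/2\big)\|P_V(x)\|^{2(p-1)}$ to produce exactly the reweighting $\tilde\omega_j$. The only cosmetic difference is that you derive this Laplacian formula by the chain rule from $\Delta q_j=2\dim(V_j)$ and $\|\nabla q_j\|^2=4q_j$, whereas the paper uses the radial form of $\Delta$ in spherical coordinates; both yield the same constants and conclusion.
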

\begin{proof}
We introduce the Laplace operator $\Delta=\sum_{i=1}^ d \frac{\partial^2}{\partial x_i^2}$. In spherical coordinates, for $x\neq 0$, we use the parametrization $x=r\varphi$, for $r >0$ and $\varphi\in S^{d-1}$, so that the function $f(x)=\|x\|^{2p}$ is constant in $\varphi$. Thus, we have $\Delta f=r^{1-d}\partial_r(r^{d-1}\partial_r f)$, which yields 
\begin{equation*}
\Delta \big(\| x \|^{2p}\big)= 4p(p-1+d/2) \| x \|^{2(p-1)}.
\end{equation*}
More generally, for a subspace $V$, we obtain
\begin{equation*}
\Delta \big(\| P_V( x) \|^{2p}\big) = 4p(p-1+\dim(V)/2) \| P_V(x) \|^{2(p-1)}.
\end{equation*}
Applying $\Delta$ to both sides of the identity $\sum_{j=1}^n \omega_j\|P_{V_j}(x)\|^{2p} = A \| x \|^{2p}$, we obtain 
\begin{equation*}
\sum_{j=1}^n \omega_j (p-1+\dim(V_j)/2)\|P_{V_j}(x)\|^{2(p-1)} = A (p-1+d/2) \| x \|^{2(p-1)},
\end{equation*}
 proving that $\{(V_j,\omega_j (p-1+\dim(V_j)/2))\}_{j=1}^n$ is a $(p-1)$-tight fusion frame. 
 \end{proof}
\begin{remark}
Iteration of Theorem \ref{th:tight p for p'} yields that if $\{(V_j,\omega_j)\}_{j=1}^n$ is a tight $p$-fusion frame, then $\{(V_j,\omega'_j)\}_{j=1}^n$ is a tight fusion frame, where $\omega'_j=\omega_j\prod_{l=1}^{p-1}(l+\dim(V_j)/2)$.
\end{remark}
\section{Equidimensional tight $p$-fusion frames, cubature formulas and the $p$-fusion frame potential}\label{sec:equal dim}
In this section,  we assume that the subspaces $V_j$ have the same dimension $k$.  Using tools from  harmonic analysis on the Grassmann manifold $\Gkd$, 
the tight $p$-fusion frames can be characterized in terms of the \emph{principal angles} of the pairs of subspaces. The same holds for minimizers of the $p$-fusion frame potential, where we minimize over all collections of $k$-dimensional subspaces whose weights add up to one. We shall recognize in these minimizers the \emph{cubatures} for the Grassmann space, also called \emph{Grassmann designs} in the case of constant weights. 
It will turn out that the minimizers of the $p$-fusion frame potential are tight $p$-fusion frames, while the converse holds only in the cases $p=1$ or $k=1$.

The use of harmonic analysis, namely the irreducible decomposition of $L^2$ and the associated zonal spherical functions, is standard in the study of designs in homogeneous spaces. The unit sphere of Euclidean space  \cite{Delsarte:1977aa, Venkov:2001aa}
served as a model for many other spaces \cite{Neumaier:1981aa, Hoggar:1982aa, Bachoc:2002aa}. We refer to \cite{Harpe:2005aa} for a general framework
 for cubature formulas in polynomial spaces and to \cite{Bachoc:2002aa}  for the notion of designs in Grassmann spaces (see also \cite{Bachoc:2004aa, Bachoc:2005aa}). 

\subsection{A closed formula for the tight $p$-fusion frame bound}
The next proposition shows that, after possibly a change from  $\{V_j\}$ to $\{V_j^\perp\}$, the condition $k\leq d/2$  can be fulfilled. The  assumption that $k\leq d/2$ will be conveniently followed in the remaining of this section.
\begin{proposition}\label{prop equal dim}
If $\{(V_j,\omega_j)\}_{j=1}^n$ is a $p$-tight fusion frame of equal dimension $k$, then:
\begin{enumerate}
\item[(1)] $\{(V_j,\omega_j)\}_{j=1}^n$ is a $p'$-tight fusion frame for all $1\leq p'\leq p$.
\item[(2)] $\{(V_j^\perp,\omega_j)\}_{j=1}^n$ is also a $p$-tight fusion frame.
\end{enumerate}
\end{proposition}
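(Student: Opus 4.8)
The plan is to prove (1) by iterating Theorem~\ref{th:tight p for p'} and to prove (2) by exploiting the complementarity relation $P_{V^\perp} = I_d - P_V$. For part (1), the key observation is that in the equidimensional case every subspace has $\dim(V_j)=k$, so the reweighting factor $(p-1+\dim(V_j)/2)=(p-1+k/2)$ appearing in Theorem~\ref{th:tight p for p'} is the \emph{same constant} for all $j$. Consequently the passage from a tight $p$-fusion frame to a tight $(p-1)$-fusion frame multiplies every weight $\omega_j$ by the identical scalar $(p-1+k/2)$. This global rescaling leaves the \emph{ratios} of the weights unchanged, so the collection $\{(V_j,\omega_j)\}_{j=1}^n$ itself (with the \emph{original} weights) is again a tight $(p-1)$-fusion frame, merely with a rescaled frame bound. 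Iterating this down to any $p'$ with $1\leq p'\leq p$ gives the claim.

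For part (2), I would start from the defining condition \eqref{eq:p fusion def2}, namely that for all $x\in S^{d-1}$,
\begin{equation*}
\sum_{j=1}^n \omega_j\langle P_x,P_{V_j}\rangle^{p}=A.
\end{equation*}
Writing $\langle P_x,P_{V_j^\perp}\rangle=\langle P_x,I_d-P_{V_j}\rangle=\trace(P_x)-\langle P_x,P_{V_j}\rangle=1-\langle P_x,P_{V_j}\rangle$, I want to control $\sum_j\omega_j\bigl(1-\langle P_x,P_{V_j}\rangle\bigr)^{p}$. Expanding this by the binomial theorem yields
\begin{equation*}
\sum_{j=1}^n\omega_j\langle P_x,P_{V_j^\perp}\rangle^{p}
=\sum_{r=0}^{p}\binom{p}{r}(-1)^{r}\sum_{j=1}^n\omega_j\langle P_x,P_{V_j}\rangle^{r}.
\end{equation*}
By part (1), each inner sum $\sum_j\omega_j\langle P_x,P_{V_j}\rangle^{r}$ is a constant independent of $x\in S^{d-1}$ for every $0\leq r\leq p$ (the case $r=0$ giving $\sum_j\omega_j$, the case $r\geq 1$ being the tight $r$-fusion frame bound). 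Therefore the whole right-hand side is a constant in $x$, which is exactly the statement that $\{(V_j^\perp,\omega_j)\}_{j=1}^n$ is a tight $p$-fusion frame.

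The main subtlety to handle carefully is that the binomial expansion in part (2) requires tightness at \emph{every} intermediate power $r=1,\dots,p-1$, not merely at $p$; this is precisely why part (1) must be established first and why the argument genuinely uses the equidimensionality (through part (1)) rather than applying to arbitrary subspaces. I do not expect any serious obstacle beyond bookkeeping: the constant frame bound for the complementary frame can be read off as the alternating sum $\sum_{r=0}^{p}\binom{p}{r}(-1)^{r}A_r$, where $A_r$ denotes the tight $r$-fusion frame bound of the original collection (with $A_0=\sum_j\omega_j$), and positivity of this bound is automatic since the left-hand side $\sum_j\omega_j\langle P_x,P_{V_j^\perp}\rangle^{p}$ is a sum of nonnegative terms with positive weights and is not identically zero.
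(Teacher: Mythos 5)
Your proposal is correct and takes essentially the same route as the paper: part (1) by absorbing the constant equidimensional factor $(p-1+k/2)$ from Theorem~\ref{th:tight p for p'} into the frame bound, and part (2) by the same binomial expansion of the complement, with tightness at all intermediate powers $r=1,\dots,p-1$ supplied by part (1) and positivity of the alternating sum $\sum_{r=0}^{p}(-1)^{r}\binom{p}{r}A_r$ secured by the non-triviality of $V_1^\perp$. The only cosmetic difference is that you work on the unit sphere via $\langle P_x,P_{V_j^\perp}\rangle=1-\langle P_x,P_{V_j}\rangle$, whereas the paper keeps the homogeneous identity $\|P_{V_j^\perp}(x)\|^{2}=\|x\|^{2}-\|P_{V_j}(x)\|^{2}$ for general $x$; these are equivalent by homogeneity.
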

\begin{proof}
Part (1) follows from Theorem \ref{th:tight p for p'} by putting $(p-1+k/2)^{-1}$ into the fusion frame constant. For (2), we observe that $\| x \|^2=\|P_{V_j}(x)\|^2+\|P_{{V_j}^\perp}(x)\|^2$, so
\begin{align*}
\sum_{j=1}^n \omega_j\|P_{V_j^\perp}(x)\|^{2p} &= \sum_{j=1}^n \omega_j(\| x \|^2-\|P_{V_j}(x)\|^2)^{p} \\
&=\sum_{k=0}^p  (-1)^k \binom{p}{k} \| x\|^{2(p-k)} \sum_{j=1}^n \omega_j\|P_{V_j}(x)\|^{2k}\\
&=\sum_{k=0}^p  (-1)^k \binom{p}{k} \| x\|^{2(p-k)} A_k \| x\|^{2k} \\
&= \Big(\sum_{k=0}^p  (-1)^k \binom{p}{k} A_k \Big)\| x\|^{2p}, 
\end{align*}
where the second last equality, follows from the property that $\{(V_j,\omega_j)\}_{j=1}^n$ is a $k$-tight fusion frame for all $1\leq k\leq p$, and insures  the existence of some constants $A_k>0$ such that $\sum_{j=1}^n \omega_j\|P_{V_j}(x)\|^{2k}= A_k \| x\|^{2k}$. Since $V_1^\perp$ is not empty, $\sum_{k=0}^p  (-1)^k \binom{p}{k} A_k>0$, so that  $\{(V^\perp_j,\omega_j)\}_{j=1}^n$ is a tight $p$-fusion frame.
\end{proof}

\begin{remark}
It follows from \eqref{eq:specification of A} that
the constant $A_p$ in the characteristic property of tight $p$-fusion frames $\sum_{j=1}^n \omega_j \|P_{V_j}(x)\|^{2p} = A_p \| x \|^{2p}$ equals $A_p=T_{1,k,d}(p) \sum_{j=1}^n \omega_j$. Applying the Laplace operator $p$ times leads to another, more explicit, formula:
\begin{equation}\label{eq:new bound formula}
A_p= \frac{(k/2)_p}{(d/2)_p}\sum_{j=1}^n\omega_j,
\end{equation}
where we employ the standard notation $(a)_p=a(a+1)\cdots (a+p-1)$.
\end{remark}

\subsection{Characterization of tight $p$-fusion frames by means of principal angles}
We now review the irreducible decomposition of the Hilbert space
$L^2(\Gkd)$ of complex valued functions of integrable squared module,
under the action of the orthogonal group $O(\R^d)$.  The standard
inner product on $L^2(\Gkd)$ is denoted $\langle f,g\rangle$.  Let
$V_d^{\mu}$ denote the complex irreducible representation of $O(\R^d)$ canonically associated to the partition
$\mu=\mu_1\geq \dots\geq \mu_d\geq 0$ (see \cite{Goodman:1998aa}). For such a partition $\mu=(\mu_1,\dots,\mu_d)$ with parts $\mu_i$, its \emph{degree} $\deg(\mu)$  is the sum of its parts and its \emph{length}  $l(\mu)$ is the number of its non zero parts.
We usually omit the parts equal to $0$ in the notation of a partition. For example, $V_d^{(0)}$ is the trivial representation, and $V_d^{(\ell)}$ is the representation afforded by the homogeneous harmonic polynomials of degree $\ell$ (i.e.~the kernel of the Laplace operator). 
Then we have:
\begin{equation}\label{dec L2}
L^2(\Gkd)=\bigoplus_{ l(\mu)\leq k} H_{k,d}^{2\mu}, \quad\text{ where } H_{k,d}^{2\mu} \simeq V_d^{2\mu}.
\end{equation}
Here $2\mu=(2\mu_1,\dots,2\mu_d)$ runs over the partitions with even parts. 
The subspace 
\begin{equation}
\Pol_{\leq 2p} (\Gkd):=\bigoplus_{\ l(\mu)\leq k,\ \deg(\mu)\leq p} H_{k,d}^{2\mu}
\end{equation}
coincides with the space of polynomial functions on $\Gkd$ of degree bounded by $2p$. We also introduce the subspace
\begin{equation}
\Pol^1_{\leq 2p} (\Gkd):=\bigoplus_{\ \ell \leq p} H_{k,d}^{(2\ell)}\subset \Pol_{\leq 2p} (\Gkd),
\end{equation}
so that the orthogonal complement of $\Pol^1_{\leq 2p} (\Gkd)$ in $\Pol_{\leq 2p} (\Gkd)$ is the direct sum of all $ H_{k,d}^{2\mu}$, such that $2\leq l(\mu)\leq k$ and $\deg(\mu)\leq p$.

We recall that $k$ \emph{principal angles} $(\theta_1,\dots, \theta_k)\in [0,\pi/2]^k$  are associated  to a pair of subspaces $(V,W)$ of $\R^d$ with $d/2\geq \dim(V)=l\geq \dim(W)=k$. We let $y_i:=\cos^2(\theta_i)$.  Then, $y_1,\dots,y_k$ are exactly the non zero eigenvalues of the operator $P_VP_W$. In particular, we observe that $y_1+\dots+y_k=\langle P_V,P_W\rangle$.  The set $\{y_1,\dots,y_k\}$ uniquely characterizes the orbit of the pair $(V,W)$ under the action of $O(\R^d)$.

To every subspace $H_{k,d}^{2\mu}$ is associated a polynomial $P_{\mu}(y_1,\dots,y_k)$ which is symmetric in the variables $y_i$, of degree equal to $\deg(\mu)$,  satisfying $P_{\mu}(1,\dots,1)=1$, and such that
$V\mapsto P_{\mu}(y_1(V,W),\dots,y_k(V,W))$ belongs to $H_{k,d}^{2\mu}$. In fact, these two last properties uniquely determine $P_{\mu}$.  For example, $P_{(0)}=1$ and $P_{(1)}=(y_1+\dots+y_k)-k^2/d$ up to a multiplicative constant. These polynomials are called the \emph{zonal spherical polynomials} of the Grassmann manifold. They were calculated in \cite{James:1974aa}, where it is shown that they belong to the family of multivariate Jacobi polynomials. They do depend on the parameters $k$ and $d$, although those parameters are not involved in our notation, see also \cite{Bachoc:2006aa}.

Moreover, the functions $(V,W)\mapsto P_{\mu}(y_1(V,W),\dots, y_k(V,W))$ are \emph{positive definite functions} on $\Gkd$, meaning that, for all $n\geq 1$ and all $\{V_j\}_{j=1}^n\subset \Gkd$, the matrix $(P_{\mu}(y_1(V_i,V_j),\dots, y_k(V_i,V_j)))_{1\leq i,j\leq n}$ is
positive semidefinite.  As a consequence, we have:
\begin{equation}\label{psd}
\sum_{i,j=1}^n \omega_i \omega_j P_{\mu}(y_1(V_i,V_j),\dots, y_k(V_i,V_j))\geq 0, \quad \text{ for all } \{V_j\}_{j=1}^n\subset \Gkd.
\end{equation}
Taking $\mu=(1)$, the inequality \eqref{psd} becomes 
\begin{equation*}
\FFP(\{(V_j,\omega_j)\}_{j=1}^n,1)\geq \frac{1}{d}\big(\sum_{j=1}^n \omega_j k\big)^2, 
\end{equation*}
so we already see  here a connection with Theorem \ref{prop:generalized simplex bound}. Now we are in the position to characterize the tight $p$-fusion frames.

\begin{theorem}\label{Th weak designs}
The following properties are equivalent for $\{(V_j,\omega_j)\}_{j=1}^n$, where $\{V_j\}_{j=1}^n\subset \mathcal{G}_{k,d}$ and $\sum_{j=1}^n \omega_j=1$:
\begin{enumerate}
\item[(1)] $\{(V_j,\omega_j)\}_{j=1}^n$ is a tight $p$-fusion frame.
\item[(2)] For all $f\in \Pol_{\leq 2p} ^1(\Gkd)$, 
\begin{equation}\label{weak design}
\int_{\Gkd} f(V)d\sigma_k(V)= \sum_{j=1}^n \omega_j f(V_j).
\end{equation}
\item[(3)] For $1\leq \ell\leq p$, for all $f\in H_{k,d}^{(2\ell)}$, $\sum_{j=1}^n \omega_j f(V_j)=0$.
\item[(4)] For $1\leq \ell\leq p$, $\sum_{i,j=1}^n\omega_i \omega_j P_{(\ell)}(y_1(V_i,V_j),\dots, y_k(V_i,V_j))=0$.
\end{enumerate}
\end{theorem}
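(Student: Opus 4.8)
The plan is to prove the cycle of implications by passing through the harmonic decomposition \eqref{dec L2} of $L^2(\mathcal{G}_{k,d})$. For each unit vector $x$, introduce the \emph{line-zonal} function $g_x(V) := \langle P_x, P_V\rangle^p$ on $\mathcal{G}_{k,d}$. The tight $p$-fusion frame condition \eqref{eq:p fusion def2}, read for the normalized collection, is exactly $\sum_{j} \omega_j g_x(V_j) = A$ for every $x \in S^{d-1}$, and integrating over $\mathcal{G}_{1,d}$ as in \eqref{eq:specification of A} forces $A = T_{1,k,d}(p) = \int_{\mathcal{G}_{k,d}} g_x \, d\sigma_k$ by \eqref{eq:T1kd}. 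So property (1) says precisely that the cubature formula \eqref{weak design} holds for every test function of the form $g_x$.

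First I would record two facts about $g_x$: (i) $g_x \in \Pol^1_{\leq 2p}(\mathcal{G}_{k,d})$, and (ii) the family $\{g_x : x \in S^{d-1}\}$ spans $\Pol^1_{\leq 2p}(\mathcal{G}_{k,d})$. For (i), the quantity $\langle P_x, P_V\rangle$ depends on the pair $(\R x, V)$ only through its single nonzero principal-angle eigenvalue $y_1 = \|P_V(x)\|^2$, so $g_x$ is a polynomial of degree $\le p$ in $y_1$ and is zonal with respect to the line $\R x$; expanding it in the line-zonal polynomials $P_{(0)},\dots,P_{(p)}$ places it in $\bigoplus_{\ell\le p} H_{k,d}^{(2\ell)} = \Pol^1_{\leq 2p}(\mathcal{G}_{k,d})$. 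For (ii), the span is $O(\R^d)$-invariant, since $g_{Ux}(V) = g_x(U^{-1}V)$, hence is a sub-sum of the isotypic pieces $H_{k,d}^{(2\ell)}$; it therefore equals all of $\Pol^1_{\leq 2p}(\mathcal{G}_{k,d})$ as soon as the projection of $g_x$ onto each $H_{k,d}^{(2\ell)}$, $0\le \ell\le p$, is nonzero. Granting (i) and (ii), the equivalence (1) $\Leftrightarrow$ (2) is immediate: (2) $\Rightarrow$ (1) applies \eqref{weak design} to $f=g_x$, while (1) $\Rightarrow$ (2) uses that $\{g_x\}$ spans the test space on which both sides of \eqref{weak design} are linear.

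Next I would treat (2) $\Leftrightarrow$ (3) by splitting $\Pol^1_{\leq 2p}(\mathcal{G}_{k,d}) = \bigoplus_{\ell=0}^p H_{k,d}^{(2\ell)}$ and testing \eqref{weak design} summand by summand. On $H_{k,d}^{(0)}$ (constants) the identity holds trivially, since $\sum_j \omega_j = 1$ and $\sigma_k$ is normalized; on each nontrivial piece $H_{k,d}^{(2\ell)}$, $\ell\ge 1$, we have $\int_{\mathcal{G}_{k,d}} f\, d\sigma_k = 0$ by orthogonality to constants, so \eqref{weak design} reduces to $\sum_j \omega_j f(V_j)=0$, which is exactly (3). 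For (3) $\Leftrightarrow$ (4) I would invoke the addition formula underlying \eqref{psd}: choosing a real orthonormal basis $\{e_{\ell,m}\}_m$ of $H_{k,d}^{(2\ell)}$, one has $\sum_m e_{\ell,m}(V)\, e_{\ell,m}(W) = h_\ell\, P_{(\ell)}(y_1(V,W),\dots,y_k(V,W))$ with $h_\ell = \dim H_{k,d}^{(2\ell)} > 0$, whence
\begin{equation*}
\sum_{i,j=1}^n \omega_i\omega_j P_{(\ell)}(y_1(V_i,V_j),\dots,y_k(V_i,V_j)) = \frac{1}{h_\ell}\sum_m \Big(\sum_{j=1}^n \omega_j\, e_{\ell,m}(V_j)\Big)^2 .
\end{equation*}
The right-hand side vanishes if and only if $\sum_j \omega_j e_{\ell,m}(V_j)=0$ for every $m$, i.e.\ iff $\sum_j \omega_j f(V_j)=0$ for all $f\in H_{k,d}^{(2\ell)}$; letting $\ell$ range over $1\le \ell\le p$ gives (3) $\Leftrightarrow$ (4).

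The main obstacle is claim (ii): that none of the components of $g_x$ in $H_{k,d}^{(2\ell)}$, $\ell\le p$, degenerates, equivalently that the expansion of $y_1^p$ into the line-zonal polynomials $P_{(0)},\dots,P_{(p)}$ has every coefficient nonzero. The top coefficient ($\ell=p$) is nonzero for degree reasons, but the vanishing of a lower one would shrink the test space and break (1) $\Rightarrow$ (2). I expect to settle this from the explicit form of the zonal polynomials as multivariate Jacobi polynomials computed in \cite{James:1974aa}: the coefficient of $P_{(\ell)}$ is the projection of $y_1^p$ onto the corresponding one-variable Jacobi polynomial against the appropriate weight, and these are nonzero (indeed positive) for $0< k\le d/2$ and $0\le \ell\le p$; the $\ell=0$ coefficient is the value $(k/2)_p/(d/2)_p$ appearing in \eqref{eq:new bound formula}.
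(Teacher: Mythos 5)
Your proposal is, in outline, the same proof as the paper's: the paper also works with the test functions $s_x^p(V)=\langle P_x,P_V\rangle^p$ (your $g_x$), shows they lie in $\Pol^1_{\leq 2p}(\Gkd)$, identifies (1) with the cubature identity \eqref{weak design} tested on these functions via \eqref{eq:T1kd}, and reduces (1) $\Rightarrow$ (2) to showing that the span $\Sigma$ of the $s_x^p$ is all of $\Pol^1_{\leq 2p}(\Gkd)$. Two local differences are worth noting. For membership $g_x\in\Pol^1_{\leq 2p}(\Gkd)$, the paper argues by Schur's lemma (the $O(\R^d)$-equivariant map $f\mapsto (\R x\mapsto \langle s_x^p,f\rangle)$ must vanish on $H_{k,d}^{2\mu}$ with $l(\mu)\geq 2$, since $L^2(\God)$ contains no copy of $V_d^{2\mu}$), whereas you expand $y_1^p$ in the intertwining polynomials $P_{(\ell)}^{1,k}$; both rest on \cite{James:1974aa} and are equally sound. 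Also, you prove (2) $\Leftrightarrow$ (3) $\Leftrightarrow$ (4) in full via the addition formula $\sum_m e_{\ell,m}(V)e_{\ell,m}(W)=h_\ell P_{(\ell)}(y_1(V,W),\dots,y_k(V,W))$, where the paper simply delegates these equivalences to \cite[Proposition 4.2]{Bachoc:2002aa}; your argument is the standard one and is correct (using that $H_{k,d}^{(2\ell)}$ admits a real orthonormal basis).

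The one place where you stop short of an argument is precisely the step the paper treats as the crux: the nonvanishing of \emph{all} coefficients $c_\ell$, $0\leq\ell\leq p$, in the expansion of $y_1^p$ in the $P_{(\ell)}^{1,k}$, equivalently the nonvanishing of $\int_0^1 y_1^p\,P_{(\ell)}^{1,k}(y_1)\,y_1^{(k-2)/2}(1-y_1)^{(d-2-k)/2}\,dy_1$. You flag this as ``the main obstacle'' and say you \emph{expect} to settle it from the explicit form of the Jacobi polynomials; that is a plan, not a proof, and the claim is not automatic: by orthogonality, the projection of a monomial $y_1^{\ell'}$ onto $P_{(\ell)}^{1,k}$ vanishes whenever $\ell>\ell'$, so positivity of the lower-order coefficients of $y_1^p$ genuinely needs a reason. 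The paper closes this by induction on $p$: write $y_1^p P_{(\ell)}^{1,k}=y_1^{p-1}\bigl(y_1P_{(\ell)}^{1,k}\bigr)$ and substitute the three-term relation $y_1P_{(\ell)}^{1,k}=a_\ell P_{(\ell+1)}^{1,k}+b_\ell P_{(\ell)}^{1,k}+c_\ell P_{(\ell-1)}^{1,k}$, whose coefficients, read off from the classical Jacobi recurrence \cite{Szego:1939aa}, are positive; positivity of all the integrals then propagates. Your check that the $\ell=0$ coefficient equals $(k/2)_p/(d/2)_p>0$ (consistent with \eqref{eq:new bound formula}) covers only one of the needed cases. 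With this inductive argument inserted, your proof is complete and coincides in substance with the paper's.
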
 

\begin{proof} The proof of the equivalence of (2), (3), and (4) is similar to the proof of  \cite[Proposition 4.2]{Bachoc:2002aa}, so we skip it. 
Let, for $x\in \R^d$, $s_x^p(V):=\langle P_x,P_V \rangle^p$. Clearly $s_x^p\in \Pol_{\leq 2p} (\Gkd)$. We claim that  $s_x^p\in \Pol^1_{\leq 2p} (\Gkd)$. 
Let $f\in H_{k,d}^{2\mu}$ with $l(\mu)\geq 2$; we want to prove that $\langle s_x^p, f\rangle =0$. Indeed, the application that sends $f\in H_{k,d}^{2\mu}$ to $\R x\mapsto \langle s_x^p, f\rangle\in L^2(\God)$ is $O(\R^d)$-equivariant. 
Because $L^2(\God)\simeq \bigoplus_{\ell\geq 0} V_d^{(2\ell)}$ does not contain the representation $V_d^{2\mu}\simeq H_{k,d}^{2\mu}$, by Schur's lemma, this application has to be identically zero.

Let $\Sigma$ denote the subspace of $L^2(\Gkd)$ spanned by the functions $s_x^p$ when $x$ runs in $\R^d$. We observe that $\Sigma$ is invariant under the action of the orthogonal group. We have just proved that $\Sigma\subset \Pol^1_{\leq 2p} (\Gkd)$, so  (2) implies (1). For the converse implication, we need to prove that $\Sigma= \Pol^1_{\leq 2p} (\Gkd)$. Because  $\Pol^1_{\leq 2p} (\Gkd)$ is the direct sum of the irreducible and pairwise non isomorphic $O(\R^d)$-subspaces $H_{k,d}^{(2\ell)}$ for $0\leq \ell \leq p$, either 
$H_{k,d}^{(2\ell)}\subset \Sigma$, or $H_{k,d}^{(2\ell)}$ and $\Sigma$ are orthogonal.  In order to rule out this second possibility, we call for another sequence of polynomials denoted $P_{(\ell)}^{1,k}(y_1)$.
These polynomials are orthogonal for the measure $y_1^{(k-2)/2}(1-y_1)^{(d-2-k)/2}dy_1$ over the interval $[0,1]$, which is the measure induced on $y_1(x,V)$ by the measures on the Grassmann spaces, and are normalized by the property 
$P_{(\ell)}^{1,k}(1)=1$. Here $y_1(x,V)$ stands for $y_1(\R x,V)=\langle P_x,P_V\rangle$. These polynomials are characterized (up to a multiplicative factor)
by the property 
that $\R x\mapsto P_{(\ell)}^{1,k}(y_1(x,V))$ belongs to $H_{1,d}^{(2\ell)}$ and $V\mapsto P_{(\ell)}^{1,k}(y_1(x,V))$ belongs to $H_{k,d}^{(2\ell)}$ (see \cite{James:1974aa}). So, it is enough to prove that
\begin{equation*}
\int_{\Gkd} \langle P_x,P_V\rangle^p P_{(\ell)}^{1,k}(y_1(x,V)) d\sigma_k(V) \neq 0 \quad \text{for } 0\leq \ell \leq p
\end{equation*}
or equivalently that 
\begin{equation}\label{non neg}
\int_0^1 y_1^p P_{(\ell)}^{1,k}(y_1)  y_1^{(k-2)/2}(1-y_1)^{(d-2-k)/2}dy_1 \neq 0 \quad \text{for } 0\leq \ell \leq p.
\end{equation}
In fact, we can prove by induction on $p$ that the integral in \eqref{non neg} is positive. In the inductive step, we let $y_1^p P_{(\ell)}^{1,k}(y_1)=y_1^{p-1}(y_1P_{(\ell)}^{1,k}(y_1))$, and we replace 
$y_1P_{(\ell)}^{1,k}(y_1)$ by 
\begin{equation*}
y_1P_{(\ell)}^{1,k}(y_1) = a_{\ell} P_{(\ell+1))}^{1,k}(y_1) +b_{\ell}P_{(\ell)}^{1,k}(y_1) +c_{\ell}P_{((\ell-1))}^{1,k}(y_1), 
\end{equation*}
where the coefficients $a_{\ell}$, $b_{\ell}$ and $c_{\ell}$ can be computed from the coefficients in the three terms relation of the classical Jacobi polynomials in one variable 
(see \cite{Szego:1939aa}). It turns out  fortunately that $a_{\ell}$, $b_{\ell}$ and $c_{\ell}$ are  positive numbers. 
\end{proof}

\begin{remark}\label{remark:before curbature rules}
\begin{enumerate}
\item Theorem \ref{Th weak designs}(4) is the characterization of tight $p$-fusion frames we were aiming at, involving only the principal angles of the pairs $(V_i,V_j)$. 
\item The characteristic property (2) is reminiscent to so-called \emph{cubature formulas}. If the most classical setting for cubature formulas is numerical integration of polynomial  
functions on an interval of the real numbers,  they have also been extensively studied over other spaces such as the unit sphere of Euclidean space, although not over Grassmann spaces to
our knowledge. In \cite{Harpe:2005aa} a general framework
is provided for cubature formulas in polynomial spaces. Following \cite[Definition 1.3]{Harpe:2005aa}, a sequence of functional spaces $\FFF^{(p)}$ is said to be polynomial  if $\FFF^{(0)}=\C$  and  if $\FFF^{(p)}$ is generated by the products of elements of $\FFF^{(1)}$ and of $\FFF^{(p-1)}$.  It should be noted that the spaces $\Pol_{\leq 2p} ^1(\Gkd)$ are not ``polynomial spaces'' in this sense when $k>1$. Indeed, the products $f_1f_2$, where $f_i\in \Pol_{\leq 2} ^1(\Gkd)$, span $\Pol_{\leq 4} (\Gkd)$, which is larger than $\Pol_{\leq 4} ^1(\Gkd)$ when $k\geq 2$. So it is more adequate to define
cubature formulas for the elements of $\Pol_{\leq 2p}(\Gkd)$, which are polynomial.
\end{enumerate}
\end{remark}

\subsection{Cubature formulas as minimizers of the $p$-fusion frame potential}
In this section, we define cubature formulas on the Grassmann space and discuss their relations to the $p$-fusion frame potential.
\begin{definition}
Let $\{V_j\}_{j=1}^n$ be a finite subset of $\Gkd$ and let $\{\omega_j\}_{j=1}^n$ be a collection of positive weights, with $\sum_{j=1}^n \omega_j=1$. Then $\{(V_j,\omega_j)\}_{j=1}^n$ is called a \emph{cubature formula of strength $2p$} (or for short a cubature of strength $2p$)  if:
\begin{equation}\label{def cubature formula}
\int_{\Gkd} f(V)d\sigma_k(V)= \sum_{j=1}^n \omega_j f(V_j) \quad \text{ for all } f\in \Pol_{\leq 2p} (\Gkd).
\end{equation}
We say that $\{V_j\}_{j=1}^n$ is a  \emph{design of strength $2p$} or  a $2p$-design if $\{(V_j,1/n)\}_{j=1}^n$ is a cubature of strength $2p$.
\end{definition}

\begin{remark}
If $n=\binom{d+1}{2}$ holds in Theorem \ref{theorem:equiangular general} and all subspaces have the same dimension, then it follows from 
 \cite[Theorem 3.6]{Bachoc:2004aa} that $\{V_j\}_{j=1}^n$ is a $4$-design. 
\end{remark}

Cubatures  can be characterized in a similar way as tight $p$-fusion frames  with the help of the zonal spherical polynomials of the Grassmann manifold, and 
they also match lower bounds on the weighted $p$-potential. These results extend straightforwardly similar characterizations of designs on the unit sphere and in Grassmann spaces, 
see \cite{Bachoc:2005aa,Bachoc:2002aa,Delsarte:1977aa,Venkov:2001aa}. For preparation and extending \eqref{eq:T1kd}, we define, for $1\leq k,l\leq d-1$, 
\begin{equation}\label{eq:def Tkld}
T_{k,l,d}(p) := \int_{\mathcal{G}_{k,d}} \int_{\mathcal{G}_{l,d}} \langle P_V, P_W \rangle^p d\sigma_k(V) d\sigma_l(W).
\end{equation}
Again, the $O(\R^d)$-invariance of $\sigma_k$ implies 
\begin{equation*}
T_{k,l,d}(p)= \int_{\mathcal{G}_{k,d}} \langle P_V, P_W \rangle^p d\sigma_k(V), \quad \text{for all $W\in\mathcal{G}_{l,d}$}. 
\end{equation*}
To shorten notation, let $T_{k,d}(p):=T_{k,k,d}(p)$.

\begin{theorem}\label{Th designs}
Let $\{V_j\}_{j=1}^n\subset \mathcal{G}_{k,d}$ and $\sum_{j=1}^n \omega_j=1$.  We then have 
\begin{equation}\label{minimum FFP}
\FFP(\{(V_j,\omega_j)\}_{j=1}^n,p)\geq  T_{k,d}(p).
\end{equation}
Moreover, the following properties are equivalent:
\begin{enumerate}
\item[(1)] $\{(V_j,\omega_j)\}_{j=1}^n$  is  a cubature of strength $2p$  in $\Gkd$.
\item[(2)] For all $\mu$, $1\leq \deg(\mu)\leq p$, for all $f\in H_{k,d}^{2\mu}$, $\sum_{j=1}^n \omega_j f(V_j)=0$.
\item[(3)] For all $\mu$, $1\leq \deg(\mu)\leq p$, $\sum_{i,j=1}^n \omega_i\omega_j P_{\mu}(y_1(V_i,V_j),\dots, y_k(V_i,V_j))=0$.
\item[(4)] $\FFP(\{(V_j,\omega_j)\}_{j=1}^n,p)= T_{k,d}(p)$.
\item[(5)] There is a constant $A>0$ such that $\sum_{j=1}^n \omega_j \langle P_W,P_{V_j}\rangle^p = A$, for all $W\in\mathcal{G}_{k,d}$.
\item[(6)] There are constants $A_l>0$, $l=1,\ldots,k$, such that $\sum_{j=1}^n \omega_j\langle P_{W_l},P_{V_j}\rangle^p = A_l$, for all $W_l\in\mathcal{G}_{l,d}$.
\end{enumerate}
\end{theorem}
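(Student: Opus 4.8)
The plan is to reduce all six conditions to the harmonic expansion of the kernel $\langle P_V,P_W\rangle^p$, combined with the positive-definiteness inequality \eqref{psd}, in the same spirit as the proof of Theorem~\ref{Th weak designs}. First I would expand the $O(\R^d)$-invariant kernel over the zonal spherical polynomials,
\[
\langle P_V,P_W\rangle^p=\bigl(y_1(V,W)+\dots+y_k(V,W)\bigr)^p=\sum_{\deg(\mu)\le p} c_\mu\,P_\mu\bigl(y_1(V,W),\dots,y_k(V,W)\bigr),
\]
which is legitimate since the left-hand side is a symmetric polynomial of degree $p$ in the $y_i$, and the $P_\mu$ with $\deg(\mu)\le p$ form a basis of that space. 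Integrating in $W$ and using that $P_\mu$ with $\mu\neq(0)$ lies in a non-trivial isotypic component (hence integrates to $0$) identifies $c_{(0)}=T_{k,d}(p)$. Moreover $c_\mu\ge 0$: the Gram matrix $(\langle P_{V_i},P_{V_j}\rangle)_{i,j}$ is positive semidefinite, so $\langle P_V,P_W\rangle$ is a positive-definite function on $\Gkd$, and by the Schur product theorem so is its $p$-th Hadamard power; decomposing a positive-definite invariant function over the zonal functions $P_\mu$ forces non-negative coefficients.

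Substituting the expansion into the potential gives
\[
\FFP(\{(V_j,\omega_j)\}_{j=1}^n,p)=\sum_{\deg(\mu)\le p} c_\mu\sum_{i,j=1}^n\omega_i\omega_j\,P_\mu\bigl(y_1(V_i,V_j),\dots,y_k(V_i,V_j)\bigr).
\]
The term $\mu=(0)$ equals $c_{(0)}\bigl(\sum_j\omega_j\bigr)^2=T_{k,d}(p)$, while every other term is $\ge 0$ by \eqref{psd} together with $c_\mu\ge 0$; this proves \eqref{minimum FFP}. For the equivalences I would treat $(1)\Leftrightarrow(2)$ and $(2)\Leftrightarrow(3)$ exactly as in Theorem~\ref{Th weak designs}: $(1)\Leftrightarrow(2)$ follows from $\Pol_{\le 2p}(\Gkd)=\bigoplus_{\deg(\mu)\le p}H_{k,d}^{2\mu}$ and the fact that only the $\mu=(0)$ component of $f$ has a non-zero integral, while $(2)\Leftrightarrow(3)$ follows from the addition formula $\dim(H_{k,d}^{2\mu})\,P_\mu(y(V,W))=\sum_m e_m(V)\overline{e_m(W)}$ for an orthonormal basis $\{e_m\}$ of $H_{k,d}^{2\mu}$, which yields $\sum_{i,j}\omega_i\omega_j P_\mu=\dim(H_{k,d}^{2\mu})^{-1}\sum_m\bigl|\sum_j\omega_j e_m(V_j)\bigr|^2$. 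The equivalence $(3)\Leftrightarrow(4)$ then reads off from the displayed expansion, \emph{provided} every $c_\mu$ with $\deg(\mu)\le p$ is strictly positive, since then $\FFP=T_{k,d}(p)$ forces each inner sum to vanish.

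For $(5)$ and $(6)$ I would regard $h_l(W):=\sum_j\omega_j\langle P_W,P_{V_j}\rangle^p$ as a function on $\mathcal{G}_{l,d}$; it lies in $\Pol_{\le 2p}(\mathcal{G}_{l,d})$, and by the addition formula its component of type $V_d^{2\mu}$ vanishes precisely when $\sum_j\omega_j f(V_j)=0$ for all $f$ in that component, i.e.\ precisely when $(3)$ holds for that $\mu$. Since the common irreducibles of $L^2(\mathcal{G}_{l,d})$ and $L^2(\Gkd)$ appearing in degree $\le p$ are exactly the $V_d^{2\mu}$ with $l(\mu)\le l$ and $\deg(\mu)\le p$, the choice $l=k$ already detects every $\mu$ occurring in $(3)$; thus $h_k$ is constant (then necessarily equal to the positive number $T_{k,d}(p)$) iff $(3)$ holds, giving $(5)\Leftrightarrow(3)$, and letting $l$ range over $1,\dots,k$ detects the same partitions, giving $(6)\Leftrightarrow(3)$ and the trivial implication $(6)\Rightarrow(5)$.

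The main obstacle is the strict positivity $c_\mu>0$ for every $\mu$ with $\deg(\mu)\le p$, equivalently that the translates $V\mapsto\langle P_V,P_W\rangle^p$ span all of $\Pol_{\le 2p}(\Gkd)$ as $W$ varies. This is the multivariate counterpart of the one-variable positivity \eqref{non neg} used in Theorem~\ref{Th weak designs}, where only single-row partitions $\mu=(\ell)$ occurred; here I expect to argue again by induction on $p$, now invoking the three-term recurrences of the multivariate Jacobi polynomials $P_\mu$ computed in \cite{James:1974aa}, the delicate point being to control the signs of all the recurrence coefficients that arise.
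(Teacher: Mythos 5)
Your overall architecture is sound and largely parallels the paper's: the bound \eqref{minimum FFP} via positive definiteness of $(V,W)\mapsto\langle P_V,P_W\rangle^p$ and the nonnegative zonal expansion with $c_{(0)}=T_{k,d}(p)$ is exactly the paper's argument, and your treatment of (1)$\Leftrightarrow$(2)$\Leftrightarrow$(3) via the addition formula is the standard one (the paper simply cites \cite{Bachoc:2002aa,Bachoc:2005aa} for the equivalence of (1)--(4)). But there is a genuine gap, and you have correctly located it yourself: the strict positivity $c_\mu>0$ for \emph{every} $\mu$ with $1\leq\deg(\mu)\leq p$ and $l(\mu)\leq k$ is left unproven, and without it your proof collapses at three points, not one. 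If some $c_\mu$ vanished, then equality in \eqref{minimum FFP} would not force $\sum_{i,j}\omega_i\omega_jP_\mu(y_1(V_i,V_j),\dots,y_k(V_i,V_j))=0$ for that $\mu$ (so (4)$\Rightarrow$(3) fails), and likewise your ``component detection'' argument for (5)$\Leftrightarrow$(3) and (6)$\Leftrightarrow$(3) silently uses the same positivity (there for the intertwining expansions of $\langle P_{W_l},P_V\rangle^p$): the $H^{2\mu}$-component of $h_l$ vanishes ``precisely when'' the design condition holds only if the corresponding coefficient is nonzero. Moreover, your proposed repair --- induction via ``three-term recurrences of the multivariate Jacobi polynomials'' --- is not a routine transplant of the argument that proved \eqref{non neg} in Theorem \ref{Th weak designs}: in several variables there is no three-term recurrence, multiplication by $y_1+\dots+y_k$ produces a Pieri-type expansion over all partitions obtained from $\mu$ by adding or removing a box, and the positivity of those connection coefficients for these BC-type polynomials is precisely the hard content rather than a sign-bookkeeping step; the one-variable proof in the paper only handles the single-row partitions $\mu=(\ell)$.

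The paper does not reprove this positivity either; it imports the equivalence (1)--(4) from \cite{Bachoc:2002aa,Bachoc:2005aa} (where the expansion of $(y_1+\dots+y_k)^p$ in the $P_\mu$ is treated, cf.\ Lemma 6.2 of \cite{Bachoc:2005aa}), and then --- this is the instructive difference --- handles (5) and (6) by elementary means requiring no fresh positivity input: (1)$\Rightarrow$(5) and (1)$\Rightarrow$(6) follow by applying the cubature identity to $V\mapsto\langle P_W,P_V\rangle^p\in\Pol_{\leq 2p}(\Gkd)$, giving $A=T_{k,d}(p)$ and $A_l=T_{l,k,d}(p)$; and for (5)$\Rightarrow$(1) one integrates the identity in (5) over $W\in\Gkd$ (as in \eqref{eq:specification of A}) to pin down $A=T_{k,d}(p)$, then substitutes $W=V_i$ and sums against $\omega_i$ to obtain $\FFP(\{(V_j,\omega_j)\}_{j=1}^n,p)=T_{k,d}(p)$, i.e.\ exactly (4). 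You should repair your plan the same way: either cite the positivity of the $c_\mu$ from the references, or replace your harmonic component-detection proof of (5)$\Rightarrow$(3) by this two-line integration argument, which funnels everything through the single equivalence (1)--(4) and removes the only genuinely delicate point from your outline.
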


\begin{proof} 
The inequality \eqref{minimum FFP} follows from the positive definiteness of  the functions $s^p(V,W):=\langle P_W,P_V\rangle^p$. Indeed, $s$ is obviously positive definite, and the product of positive definite functions is again positive definite.  Moreover, every $O(\R^d)$-invariant positive definite function $F$ on $\Gkd$ is a non negative linear combination of the zonal polynomials $P_{\mu}$ in the variables $y_1(\cdot,\cdot),\ldots,y_k(\cdot,\cdot)$, i.e.,
\begin{equation*}
F(V,W) = \sum_\mu \lambda_\mu P_\mu(y_1(V,W),\ldots,y_k(V,W)),
\end{equation*}
where $\lambda_\mu\geq 0$ for all $\mu$. This important result goes back to \cite{Bochner:1941aa}. Since $(V,W)\mapsto P_\mu(y_1(V,W),\ldots,y_k(V,W))$ is positive definite, $F-\lambda_0$ is positive definite too, so 
\begin{equation*}
\sum_{i,j=1}^n \omega_i\omega_jF(V_i,V_j)\geq \lambda_0 \big(\sum_{j=1}^n\omega_j\big)^2,
\end{equation*}
 for all $ \{V_j\}_{j=1}^n\subset \Gkd$. For
$F= s^p$, we have $\lambda_0=T_{k,d}(p)$.

The equivalences between (1)-(4) have already been proven in \cite{Bachoc:2005aa,Bachoc:2002aa} for constant weights. Incorporating weights is straightforward so we omit it here.

(1)$\Rightarrow$(5): The mapping $V\mapsto \langle P_W,P_V\rangle^{p}$ is an element in $\Pol_{\leq 2p} (\Gkd)$, for all $W\in\mathcal{G}_{l,d}$ and $l=1,\ldots,k$. For $W\in\mathcal{G}_{k,d}$, the  property \eqref{def cubature formula} implies
\begin{equation*}
\sum_{j=1}^n \omega_j\langle P_W,P_{V_j}\rangle^p = \int_{\Gkd} \langle P_W,P_V\rangle^{p}d\sigma_k(V)=T_{k,d}(p)=:A.
\end{equation*} 
The implication (1)$\Rightarrow$(6) follows in the same way using $A_l=T_{l,k,d}(p)$. Since (6)$\Rightarrow$(5) is obvious, we only need to verify (5)$\Rightarrow$(1): As for \eqref{eq:specification of A}, we can compute $A=T_{k,d}(p)$. Therefore, we derive $\sum_{i,j}\omega_i\omega_j\langle P_{V_i},P_{V_j}\rangle^p = T_{k,d}(p)$, which implies (1).

\end{proof}

\begin{remark} A few comments are in order.
\begin{enumerate}
\item Since $\Pol^1_{\leq 2}(\mathcal{G}_{k,d})\subset \Pol_{\leq 2}(\mathcal{G}_{k,d})$, every cubature of strength $2p$ is a tight $p$-fusion frame according to Theorem \ref{Th weak designs}. 
In particular, the designs of strength $2p$ in Grassmann spaces provide an interesting subclass of tight $p$-fusion frames. 
\item We have already seen that $T_{k,d}(1)=k^2/d$. In \cite[Remark 6.4]{Bachoc:2005aa}, an explicit expression of $T_{k,d}(p)$ is given for $p=2,3$.  In general, $T_{k,d}(p)$ can be calculated from the expression of 
 $(y_1+\dots+y_k)^p$ as a linear combination of the zonal polynomials $P_{\mu}$, cf.~\cite[Lemma 6.2]{Bachoc:2005aa}.
 \item For $p=1$, Theorems \ref{Th designs} and \ref{prop:generalized simplex bound} show that the tight fusion frames  of equal dimension $k$ are exactly the cubatures of strength $2$  of $\Gkd$.
 \end{enumerate}
 \end{remark}

It is natural to ask for the existence of the objects discussed in this section, namely tight $p$-fusion frames and cubatures, and beyond existence, it is also desirable to discuss the size $n$ of these objects as a function of $p$ and $d$.
In these directions, the following results are borrowed  from  \cite{Harpe:2005aa}:

\begin{proposition}[\cite{Harpe:2005aa}]\label{prop:about cubature n}

\begin{enumerate}
\item There exists a tight $p$-fusion frame $\{(V_j,\omega_j)\}_{j=1}^n$ with $n\leq \dim(\Pol_{\leq 2p} ^1(\Gkd))-1=\binom{2p+d-1}{d-1}-1$.
\item There exists a cubature $\{(V_j,\omega_j)\}_{j=1}^n$ of strength $2p$ such that the inequality $n\leq \dim(\Pol_{\leq 2p}(\mathcal{G}_{k,d}))-1$ holds.
\item If $\{(V_j,\omega_j)\}_{j=1}^n$ is a cubature of strength $4p$, then $n\geq \dim(\Pol_{\leq 2p}(\mathcal{G}_{k,d}))$. 
\end{enumerate}
\end{proposition}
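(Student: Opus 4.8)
The plan is to handle the two existence statements (1)--(2) by a single convexity argument of Tchakaloff--Carath\'eodory type, and the lower bound (3) by a Fisher-type isometry. First I would unify (1) and (2) as the statement that there is a positively weighted family with $\sum_j\omega_j=1$ integrating every element of a fixed finite-dimensional space $\FFF$ of real-valued continuous functions exactly, where $\FFF=\Pol^1_{\leq 2p}(\Gkd)$ for (1) and $\FFF=\Pol_{\leq 2p}(\Gkd)$ for (2); the reduction for (1) is exactly the equivalence (1)$\Leftrightarrow$(2) of Theorem \ref{Th weak designs}, and for (2) it is the definition of a cubature. Writing $N=\dim\FFF$ and fixing a real basis $1,h_1,\dots,h_{N-1}$ of $\FFF$ with $\int_{\Gkd}h_i\,d\sigma_k=0$, I would introduce the continuous moment map $\Psi:\Gkd\to\R^{N-1}$, $\Psi(V)=(h_1(V),\dots,h_{N-1}(V))$. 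Exactness on $\FFF$ is then equivalent to finding $V_j\in\Gkd$ and weights $\omega_j>0$ with $\sum_j\omega_j=1$ and $\sum_j\omega_j\Psi(V_j)=0$, the condition on the constant function being automatic; and $0=\int_{\Gkd}\Psi\,d\sigma_k$ lies in the convex hull of $\Psi(\Gkd)$ because it is the barycenter of $\Psi_*\sigma_k$.

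The crucial point is to realize $0$ as a convex combination of at most $N-1$ points of $\Psi(\Gkd)$, rather than the $N$ furnished by plain Carath\'eodory in $\R^{N-1}$. Here I would invoke the Fenchel--Bunt refinement of Carath\'eodory's theorem: since $\Gkd$ is connected, the image $\Psi(\Gkd)$ is a connected subset of $\R^{N-1}$, and every point of the convex hull of a connected subset of $\R^{N-1}$ is a convex combination of at most $N-1$ of its points. Deleting the points that receive zero weight then leaves a tight $p$-fusion frame (resp.\ a cubature of strength $2p$) with strictly positive weights and $n\le N-1$. For (1) it remains to record the dimension count $\dim\Pol^1_{\leq 2p}(\Gkd)=\sum_{\ell=0}^p\dim V_d^{(2\ell)}=\binom{2p+d-1}{d-1}$, which follows by telescoping the dimensions $\binom{2\ell+d-1}{d-1}-\binom{2\ell+d-3}{d-1}$ of the spaces of degree-$2\ell$ spherical harmonics.

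For the lower bound (3), I would use the product property of the polynomial sequence $\Pol_{\leq\bullet}(\Gkd)$ recorded in Remark \ref{remark:before curbature rules}, namely $\Pol_{\leq 2p}(\Gkd)\cdot\Pol_{\leq 2p}(\Gkd)\subseteq\Pol_{\leq 4p}(\Gkd)$. Exactness of a strength-$4p$ cubature on $\Pol_{\leq 4p}(\Gkd)$ then gives $\sum_{j=1}^n\omega_j f(V_j)g(V_j)=\int_{\Gkd}fg\,d\sigma_k=\langle f,g\rangle$ for all $f,g\in\Pol_{\leq 2p}(\Gkd)$. Consequently the evaluation map $E:\Pol_{\leq 2p}(\Gkd)\to\R^n$, $Ef=(\sqrt{\omega_j}\,f(V_j))_{j=1}^n$, preserves the $L^2(\sigma_k)$ inner product; being an isometry it is injective, whence $\dim\Pol_{\leq 2p}(\Gkd)\le n$.

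The main obstacle is the sharpness of the upper bounds in (1) and (2), i.e.\ getting $n\le\dim\FFF-1$ instead of the generic $n\le\dim\FFF$: the first saving is the free exactness on constants, already built into the passage to the mean-zero moment map, while the second genuinely needs the connectedness of the Grassmannian through Fenchel--Bunt. A secondary point requiring a little care is the strict positivity of the weights, which is recovered simply by discarding the zero-weight points produced by the convex representation.
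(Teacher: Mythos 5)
Your proposal is correct, but it is worth noting that the paper does not actually prove this proposition: its ``proof'' consists of citing Propositions 2.6, 2.7 and 1.7 of de la Harpe--Pache together with the dimension count $\dim(V_d^{(2\ell)})=\binom{d+2\ell-1}{d-1}-\binom{d+2\ell-3}{d-1}$. What you have done is reconstruct, in a self-contained way, the arguments behind those citations, and the reconstruction is sound. For (1) and (2), your reduction of tightness to exactness on $\Pol^1_{\leq 2p}(\Gkd)$ via Theorem \ref{Th weak designs} is legitimate (with the harmless normalization $\sum_j\omega_j=1$, since tightness is invariant under rescaling of the weights, and modulo the standing assumption $k\leq d/2$, otherwise one passes to orthogonal complements via Proposition \ref{prop equal dim}); the barycenter argument is the standard Tchakaloff-type existence proof, and you correctly identified that the $-1$ in the bound, which plain Carath\'eodory in $\R^{N-1}$ cannot deliver, requires the Fenchel--Bunt refinement together with the connectedness of the real Grassmannian --- this is exactly the mechanism in the cited source. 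Your telescoping of $\sum_{\ell=0}^{p}\bigl[\binom{2\ell+d-1}{d-1}-\binom{2\ell+d-3}{d-1}\bigr]=\binom{2p+d-1}{d-1}$ matches the fact the paper invokes. For (3), your Gram/isometry argument is the standard Fisher-type bound, and you were right to ground it in the product property $\Pol_{\leq 2p}(\Gkd)\cdot\Pol_{\leq 2p}(\Gkd)\subseteq\Pol_{\leq 4p}(\Gkd)$; this is precisely the reason, flagged in Remark \ref{remark:before curbature rules}, why cubatures are defined relative to $\Pol_{\leq 2p}(\Gkd)$ rather than $\Pol^1_{\leq 2p}(\Gkd)$ (the latter spaces are not ``polynomial'' for $k>1$, so no analogous lower bound is claimed for tight $p$-fusion frames --- and, consistently, you did not attempt one). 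The only point deserving explicit mention is that the spaces involved carry a real structure so that the moment map and the evaluation map can be taken real-valued, which you implicitly assumed and which holds since the $H^{2\mu}_{k,d}$ are spanned by real-valued polynomials in the $y_i$.
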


\begin{proof} 1. and 2.  follow from Proposition 2.6 and 2.7 in \cite{Harpe:2005aa}, and the fact that $\dim(V^{(2\ell)})=\binom{d+2\ell-1}{d-1}-\binom{d+2\ell-3}{d-1}$.
3. follows from Proposition 1.7 in \cite{Harpe:2005aa}.
\end{proof}

It should be noted that the existence statements above are non constructive by nature. In the next section,
some  explicit constructions are discussed. 

\begin{remark}
We aim to minimize the $p$-fusion frame potential among all collections of $k$-dimensional linear subspaces whose weights add up to one. Proposition \ref{prop:about cubature n} and Theorem \ref{Th designs} ensure that there exists a minimizer of cardinality less than $\dim(\Pol_{\leq 2p}(\mathcal{G}_{k,d}))$.
\end{remark}

%

\section{Some constructions of tight $p$-fusion frames}\label{sec:construction}
 In this section, we present three constructions of tight $p$-fusion frames. The first one is standard,  it uses orbits of finite subgroups of $O(\R^d)$ to construct 
 tight $p$-fusion frames of equal weights. This idea  has been extensively used for  the construction of codes and designs in many spaces (see e.g. \cite{SPLAG}, \cite{Ericson})  and also specifically in Grassmann spaces (\cite{Bachoc:2004aa},\cite{Conway:1999aa}, \cite{Creignou:2008aa}). It leads to many nice and explicit examples of highly symmetric tight $p$-fusion frames with equal weights and dimension, although only for small values of $p$. The second one relates tight $p$-fusion frames to designs in projective spaces. We show that the $p$-designs in complex and quaternionic spaces in the sense of \cite{Hoggar:1982aa} give rise to tight $p$-fusion frames of equal dimension  $2$ and $4$, respectively.  
The  last  construction  fits together tight $p$-fusion frames of different dimensions in a very simple way. It can be used, for example, to extend a tight $p$-frame for a lower dimensional space to a 
tight $p$-frame in a larger space, guided by the structure of a tight $p$-fusion frame for the larger space. These constructions are interrelated; $p$-designs in complex and quaternionic spaces 
can be constructed from orbits of complex, respectively quaternionic groups such as the reflection groups; in turn, they can be used as the building blocks in the last construction, together with tight $p$-frames in $\R^2$ or $\R^4$  in order to construct tight $p$-frames in larger dimensions.


\subsection{Tight $p$-fusion frames from orbits of finite subgroups of $O(\R^d)$.}  We address the following question: given a finite subgroup $G$ of $O(\R^d)$,
what property of $G$ would ensure that every orbit $G.V:=\{g(V) : g\in G\}$ on every Grassmann space $\Gkd$ is a tight $p$-fusion frame?  

We remark first that, if an orbit $G.V:=\{g(V) : g\in G\}$
is a tight $p$-fusion frame, then it satisfies the property \eqref{eq:p fusion def} with equal weights. To see this, one has to sum up the conditions \eqref{eq:p fusion def} for $x=g(y)$, when $g$ runs in $G$. 

The space $\R[\underline{X}]_{2p}$, $\underline{X}=(X_1,\dots,X_d)$,  of homogeneous polynomials in $d$ variables of degree $2p$, is endowed with the standard linear action of $O(\R^d)$. Let $\big(\R[\underline{X}]_{2p}\big)^G$ denote the collection of $P\in  \R[\underline{X}]_{2p}$ such that $P(\underline{X}M)=P(\underline{X})$,  for all $M\in G$. 

\begin{theorem} \label{th:irreducible group}
If $G$ is a finite subgroup of $O(\R^d)$, then the following conditions are equivalent:
\begin{enumerate}
\item[(1)] For all $1\leq k<d$ and all $V\in \Gkd$, the collection $G.V:=\{g(V) : g\in G\}$ is a tight $p$-fusion frame.
\item[(2)] $\big( \R[\underline{X}]_{2p}\big)^G=\R(X_1^2+\dots+X_d^2)^p$.
\end{enumerate}
\end{theorem}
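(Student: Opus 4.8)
The plan is to attach to every subspace $V$ a single homogeneous polynomial, namely the orbit potential
\[
f_V(x):=\sum_{g\in G}\|P_{g(V)}(x)\|^{2p},
\]
and to translate both conditions into statements about $f_V$. First I would record three elementary facts. Since $g\in O(\R^d)$ is orthogonal we have $P_{g(V)}=gP_Vg^{-1}$, and because $g$ preserves the norm, $\|P_{g(V)}(x)\|=\|gP_Vg^{-1}x\|=\|P_V(g^{-1}x)\|$; thus each summand is the $p$-th power of the quadratic form $x\mapsto\|P_V(g^{-1}x)\|^2$, so $f_V\in\R[\underline{X}]_{2p}$. A reindexing of the sum by $g\mapsto hg$ then gives $f_V(hx)=f_V(x)$ for all $h\in G$, so $f_V\in(\R[\underline{X}]_{2p})^G$. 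Finally, by the observation preceding the theorem an orbit may be taken with equal weights, and summing \eqref{eq:p fusion def} over the group shows that $G.V$ is a tight $p$-fusion frame if and only if $f_V$ is a (necessarily positive) scalar multiple of $\|x\|^{2p}=(X_1^2+\dots+X_d^2)^p$.

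With this dictionary the implication (2)$\Rightarrow$(1) is immediate: for every $k$ and every $V\in\Gkd$ the polynomial $f_V$ lies in $(\R[\underline{X}]_{2p})^G$, which by hypothesis equals $\R(X_1^2+\dots+X_d^2)^p$, and hence every orbit is tight.

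For (1)$\Rightarrow$(2) I would use only the one-dimensional subspaces supplied by the case $k=1$. The inclusion $\R(X_1^2+\dots+X_d^2)^p\subseteq(\R[\underline{X}]_{2p})^G$ always holds, since $\|x\|^{2p}$ is $O(\R^d)$-invariant, so the content is the reverse inclusion. For a unit vector $v$ one has $\|P_{\R v}(x)\|^2=\langle x,v\rangle^2$, so, writing $P_v(x):=\langle x,v\rangle^{2p}$ and using $\langle x,gv\rangle=\langle g^{-1}x,v\rangle$,
\[
f_{\R v}(x)=\sum_{g\in G}\langle x,gv\rangle^{2p}=|G|\,R(P_v)(x),
\]
where $R=\tfrac1{|G|}\sum_{g\in G}g$ is the Reynolds operator on $\R[\underline{X}]_{2p}$, whose image is exactly $(\R[\underline{X}]_{2p})^G$. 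I would then invoke the classical fact that the powers $P_v$ of linear forms, as $v$ ranges over $\R^d$, span all of $\R[\underline{X}]_{2p}$. Applying the linear, surjective operator $R$ shows that $\{f_{\R v}:v\in\R^d\}$ spans $(\R[\underline{X}]_{2p})^G$. But (1) forces each $f_{\R v}$ to be a multiple of $(X_1^2+\dots+X_d^2)^p$, whence $(\R[\underline{X}]_{2p})^G\subseteq\R(X_1^2+\dots+X_d^2)^p$, which is (2).

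The routine ingredients are the orthogonality identity and the $G$-invariance of $f_V$. The one genuine input is the spanning statement for powers of linear forms, combined with the surjectivity of $R$ onto the invariants; this is where I expect the conceptual weight to sit, although both are standard (the first by polarization or the non-degeneracy of the Veronese, the second because $R$ is a projection onto the fixed space). The only point to handle with care is the stabilizer of $V$: summing over all of $G$ counts each distinct subspace $|\mathrm{Stab}(V)|$ times, but this merely rescales $f_V$ and so does not affect whether it is proportional to $\|x\|^{2p}$.
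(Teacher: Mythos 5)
Your proof is correct, but it takes a genuinely different route from the paper's. The paper proves the theorem by harmonic analysis on the Grassmannian: using the $O(\R^d)$-module isomorphism $\Pol^1_{\leq 2p}(\Gkd)\simeq\bigoplus_{\ell\leq p}V_d^{(2\ell)}\simeq\R[\underline{X}]_{2p}$, it translates condition (2) into the vanishing of the $G$-invariants $\bigl(H_{k,d}^{(2\ell)}\bigr)^G$ for $1\leq\ell\leq p$, then shows by group averaging that this is equivalent to the vanishing of the orbit sums $\sum_{U\in G.V}f(U)$ for $f\in H_{k,d}^{(2\ell)}$, and finally invokes the characterization of tight $p$-fusion frames in Theorem \ref{Th weak designs} (after reducing to $k\leq d/2$ via Proposition \ref{prop equal dim}). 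You bypass the Grassmannian machinery entirely: you encode tightness of the orbit as the proportionality of the orbit potential $f_V(x)=\sum_{g\in G}\|P_{g(V)}(x)\|^{2p}$ to $(X_1^2+\dots+X_d^2)^p$ --- which is immediate from \eqref{eq:p fusion def} since $f_V$ is itself a homogeneous polynomial of degree $2p$ --- and then, for (1)$\Rightarrow$(2), use the Reynolds operator together with the classical polarization fact that $2p$-th powers of linear forms span $\R[\underline{X}]_{2p}$. Your argument is more elementary and self-contained: it needs neither the decomposition \eqref{dec L2}, nor Theorem \ref{Th weak designs} (whose proof, via the positivity \eqref{non neg} of Jacobi-polynomial integrals, is the technical heart of the paper's route), nor the reduction to $k\leq d/2$; all the small points (the stabilizer only rescaling $f_V$, positivity of the constant by evaluating at $x\in V$, $G$-invariance of $f_V$ by reindexing) are handled correctly. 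It also yields a slightly stronger statement for free: since your (1)$\Rightarrow$(2) uses only the case $k=1$, tightness of all line orbits (i.e., $G$ being $2p$-homogeneous, in the terminology of the paper's subsequent example) already forces tightness of every orbit in every $\Gkd$. What the paper's approach buys instead is the explicit link to the design-theoretic framework --- it identifies orbits as averaging sets for the harmonic subspaces $H_{k,d}^{(2\ell)}$, mirroring the proof for Grassmannian designs it adapts --- which is the conceptual context the surrounding sections rely on.
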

\begin{remark}\label{remark:set and sequences}
Obviously $(X_1^2+\dots +X_d^2)^p$ is invariant by the orthogonal group so the condition (2) means that $G$ does not afford other invariant
polynomials than the ones which are invariant by the full orthogonal group.
\end{remark}

\begin{proof}[Proof of Theorem \ref{th:irreducible group}] It is a straightforward adaptation of the proof of \cite[Theorem 4.1]{Bachoc:2005aa}.  In view of (2) in Proposition \ref{prop equal dim} we can assume $k\leq d/2$.
We recall that 
\begin{equation*}
\Pol_{\leq 2p}^1(\Gkd)=\bigoplus_{\ell=0}^{p} H_{k,d}^{(2\ell)}\simeq \bigoplus_{\ell=0}^p V_d^{(2\ell)} \simeq \R[\underline{X}]_{2p}.
\end{equation*}
So the condition (2) is also equivalent to: $\big(H_{k,d}^{(2\ell)}\big)^G=\{0\}$ for all $k\leq d/2$ and for all $1\leq \ell\leq p$.

Let $V\in \Gkd$ and let $G_V$ denote the stabilizer of $V$ in $G$. For $f\in H_{k,d}^{(2\ell)}$, 
\begin{equation*}
\sum_{U\in G.V} f(U)=\frac{1}{|G_V|} \sum_{g\in G} f(g(V))=\Big(\sum_{g\in G} g.f \Big) (V),
\end{equation*}
where $g.f(V):=f(g(V))$. Since $\sum_{g\in G} g.f$ runs in $\big(H_{k,d}^{(2\ell)}\big)^G$, condition (2) is also equivalent to $\sum_{U\in G.V} f(U)=0$ for all $k$, $V\in \Gkd$ 
and $1\leq \ell\leq p$. From condition (2) in Theorem \ref{Th weak designs}   it amounts to the property that $G.V$ is a tight $p$-fusion frame for all $k$ and $V\in \Gkd$.
\end{proof}

\begin{example}
\begin{enumerate}
\item For $p=1$, condition (2) is equivalent to the irreducibility of $\R^d$ under the action of $G$, i.e., any orbit $Gx$ spans $\R^d$, for $0\neq x\in\R^d$. So
we recover the criterion of \cite{Vale:2005aa} for tight frames.  In \cite{Creignou:2008aa}, pairs $(G,H)$ such that $G$ acts irreducibly on $\R^d$ and two-transitively on $G/H$, are used to construct 
Grassmannian packings that are equiangular and meet the simplex bound (see the Section \ref{subsection:finite p frames} for these notions). Thus they  also provide tight frames.
\item For $p\geq 2$, standard examples are given by the Weyl groups of the root systems $A_2$, $D_4$, $E_6$, $E_7$ ($p=2$), $E_8$ ($p=3$), $H_4$ ($p=5$). An infinite family 
is provided by the real Clifford groups ${\mathcal C}_k\subset O(\R^{2^k})$ that satisfy (2) for $p=3$; some orbits of these groups on Grassmann spaces lead to good Grassmann codes  as described in \cite{Calderbank:1999uq}.
Another well-known example is the automorphism group of the Leech lattice $2.Co_1$,
a subgroup of $O(\R^{24})$ that holds the desired property for $p=5$. 
\item It should be noted that, if $-\Id\in G$, condition (2) is exactly the condition required for every $G$-orbits on the unit sphere $S^{d-1}$ to be \emph{$2p$-spherical designs} (being antipodal, these designs are trivially of strength $(2p+1)$).
These groups are  called \emph{$2p$-homogeneous} in \cite{Harpe:2004aa}. A useful sufficient condition for a group $G$ to be $2p$-homogeneous, due to E. Bannai, is that the restrictions 
to $G$ of the $O(\R^d)$-representations $V_d^{(k)}$ for $1\leq k\leq p$ are irreducible. We refer to \cite{Harpe:2004aa} for a proof of this result and for more properties of homogeneous groups. See also \cite{Lempken:2001aa} for a classification of the quasi-simple groups
such that $V_d^{(2)}$ is irreducible.
\item In \cite{Tiep:2006aa} a complete classification of the finite groups $G\subset O(\R^d)$
such that $(V_d^{(2)})^G=(V_d^{(4)})^G=(V_d^{(2,2)})^G=\{0\}$, is given. The assumption is here slightly stronger than  (2) for $p=2$; it arises naturally 
in the study of  designs in Grassmann spaces \cite{Bachoc:2002aa, Bachoc:2005aa}.
\end{enumerate}
\end{example}

\subsection{Tight $p$-fusion frames from $p$-designs in projective spaces}

The notion of $p$-designs has been developed in a uniform setting for the connected, compact, symmetric spaces of rank one \cite{Delsarte:1975fk, Neumaier:1981aa, Hoggar:1982aa}. These spaces include the projective spaces over the real, complex and quaternionic fields (the unit spheres of Euclidean space, and the projective plane over the octonions, in fact, make the list of rank one, connected, compact, symmetric spaces complete). 
For  $K=\R,\C,{\mathbb H}$, a subset  $\{P_j\}_{j=1}^n\subset {\mathcal P}(K^d)$ of the projective space is a $p$-design if
\begin{equation*}
\int_{{\mathcal P}(K^d)} f(V) d\sigma(V)=\frac{1}{n}\sum_{j=1}^n f(P_j)
\end{equation*}
for all functions $f\in \Pol_{\leq p}({\mathcal P}(K^d))$, where $\sigma$ denotes the normalized Lebesgue measure on ${\mathcal P}(K^d)$ and $\Pol_{\leq p}({\mathcal P}(K^d))$ is a 
subspace of functions on ${\mathcal P}(K^d)$, which are polynomial of degree bounded by $p$ in some reasonable sense. 

It should be noted that, unfortunately, the notations disagree between \cite{Hoggar:1982aa} and \cite{Bachoc:2002aa} in the case ${\mathcal G}_{1,d}={\mathcal P}(\R^d)$,
so that what is called a $p$-design in the projective setting \cite{Hoggar:1982aa} corresponds to a $2p$-design in \cite{Bachoc:2002aa}. 

Let $h(x,y)=\sum_{i=1}^d x_i \overline{y_i}$ denote the standard hermitian form on $K^d$ (where the conjugation on $\R$ is the identity). For $P_1$, $P_2$ in ${\mathcal P}(K^d)$, 
we define $t(P_1,P_2)$  to be the common value of $|h(x_1,x_2)|^2$ for any  $x_i\in P_i$, $h(x_i,x_i)=1$. If $\Pol_{\leq p}({\mathcal P}(K^d))$ is the span of $\{ P\mapsto t(P,Q)^p : Q\in  \mathcal{P}(K^d) \}$, then equivalently, 
$\{P_j\}_{j=1}^n\subset {\mathcal P}(K^d)$ is a $p$-design if, for some constant $A_p$, 
\begin{equation}\label{projective design}
\sum_{j=1}^n t(P_j, P)^p= A_p\quad \text{ for all }P\in {\mathcal P}(K^d).
\end{equation}

Now, we make the usual identification of $\C^d$ with $\R^{2d}$ and ${\mathbb H}^d$ with $\R^{4d}$, noticing that $h(x,x)=\Vert x \Vert^2$.
Obviously, for $x\in K^d$, and $P_j=Kx_j$, $h(x,x)=h(x_j,x_j)=1$, we have $\Vert P_{P_j}(x)\Vert^2=t(P_j,Kx)$ so that \eqref{projective design}
amounts to the property of tight $p$-fusion frames  $\sum_{j=1}^n \Vert P_{P_j}(x)\Vert^{2p}= A_p$ for $x\in K^d$, $\Vert x\Vert^2=1$.  We have proved:

\begin{theorem}
A $p$-design in the projective space ${\mathcal P}(\C^d)$ (respectively ${\mathcal P}({\mathbb H}^d)$) is a tight $p$-fusion frame in $\R^{2d}$ with subspaces of equal dimension $2$
(respectively in $\R^{4d}$ with subspaces of equal dimension $4$). 
\end{theorem}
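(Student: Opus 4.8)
The plan is to reduce the claim to the already-established projective characterization \eqref{projective design} via the standard real-form identification. First I would fix $K=\C$ (the case $K=\mathbb{H}$ being entirely parallel, replacing $2$ by $4$ everywhere) and recall the identification of $\C^d$ with $\R^{2d}$ under which the standard Hermitian norm $h(x,x)$ coincides with the Euclidean norm $\Vert x\Vert^2$. The crucial geometric observation is that a one-dimensional $\C$-subspace $P=\C v\subset\C^d$, when viewed inside $\R^{2d}$, becomes a \emph{two-dimensional} real subspace, spanned over $\R$ by $v$ and $iv$; this is why the resulting fusion frame consists of subspaces of equal dimension $2$ (and dimension $4$ in the quaternionic case, where $\mathbb{H}v$ is spanned by $v,iv,jv,kv$).

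The main identity I would then verify is that, for unit vectors $x_j,x\in K^d$ with $h(x_j,x_j)=h(x,x)=1$ and the associated projective points $P_j=Kx_j$,
\begin{equation*}
\Vert P_{P_j}(x)\Vert^2 = t(P_j, Kx) = |h(x_j,x)|^2,
\end{equation*}
where on the left $P_{P_j}$ denotes the orthogonal projection in $\R^{2d}$ onto the real subspace underlying $P_j$. This is the single computational step that does the real work: one must check that projecting the real vector $x$ onto the real $2$-plane $\spann_\R\{x_j, i x_j\}$ produces a squared Euclidean length equal to $|h(x_j,x)|^2$. This follows by writing $P_{P_j}(x)=\langle x,x_j\rangle_\R x_j+\langle x, i x_j\rangle_\R\, (i x_j)$ and using that the real inner product on $\R^{2d}$ is the real part of the Hermitian form, so $\langle x,x_j\rangle_\R^2+\langle x, i x_j\rangle_\R^2 = |h(x_j,x)|^2$.

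Given this identity, raising to the $p$-th power and summing over $j$ converts the projective design condition \eqref{projective design}, namely $\sum_{j=1}^n t(P_j,P)^p=A_p$ for all $P\in{\mathcal P}(K^d)$, into the statement
\begin{equation*}
\sum_{j=1}^n \Vert P_{P_j}(x)\Vert^{2p}=A_p\quad\text{for all }x\in K^d\text{ with }\Vert x\Vert^2=1.
\end{equation*}
Finally I would note that every unit vector of $\R^{2d}$ arises as such an $x$, so this is exactly the defining equality \eqref{eq:p fusion def2} of a tight $p$-fusion frame in $\R^{2d}$ (with constant weights), the subspaces having equal dimension $2$. I expect the only genuine obstacle to be a clean verification of the projection identity $\Vert P_{P_j}(x)\Vert^2=|h(x_j,x)|^2$, and in particular tracking the $\R$-bilinear versus Hermitian bookkeeping correctly in the quaternionic case, where the coefficient field is non-commutative; there one spans the real $4$-plane by $\{x_j, i x_j, j x_j, k x_j\}$ and the sum of the four squared real inner products reassembles $|h(x_j,x)|^2$. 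Everything else is the formal passage from \eqref{projective design} to \eqref{eq:p fusion def2}, which is immediate once the identity is in hand.
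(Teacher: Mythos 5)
Your proposal is correct and follows essentially the same route as the paper: identify $K^d$ with the underlying real space, reduce the claim to the identity $\Vert P_{P_j}(x)\Vert^2=t(P_j,Kx)$, and pass from \eqref{projective design} to \eqref{eq:p fusion def2}. The only difference is that you explicitly verify this projection identity (via the real-orthonormal basis $\{x_j,ix_j\}$, resp.\ $\{x_j,ix_j,jx_j,kx_j\}$, and $\langle x,y\rangle_\R=\mathrm{Re}\,h(x,y)$), a step the paper dismisses as obvious, and your computation is correct.
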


Many examples of projective $p$-designs for $p\leq 5$ are described in \cite{Hoggar:1982aa}. 
Most of them are related to complex or quaternionic reflection groups.

\subsection{Extension and refinement of tight $p$-fusion frames}
We consider the following construction. Let  $\FFF_0=\{(V_j,v_j)\}_{j=1}^n$ and $\FFF_1=\{(W_i,\omega_i)\}_{i=1}^m$ be two sets of weighted linear subspaces, such that
$V_j\subset \R^{\ell}$, $\ell< d$ and $W_i\in \Gld$. 
Let $f_i: \R^{\ell}\to W_i$ be some fixed isometries, and let $V_{i,j}:=f_i(V_j)\subset W_i$.  Then, for all $i$, $\{V_{i,j}\}_{j=1}^n$ is a collection of linear subspaces of $W_i$ which is isometric to $\FFF_0$.
We now consider:  
\begin{equation}\label{def: fit}
\FFF:= \{(V_{i,j}, \omega_i v_j)\}_{\substack{1\leq i\leq  m\\ 1\leq j\leq n}}.
\end{equation}

\begin{theorem}\label{th:fitting together} 
If $\FFF_0$ and $\FFF_1$ are   tight $p$-fusion frames, then $\FFF$ is also   a tight $p$-fusion frame.
\end{theorem}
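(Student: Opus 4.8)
The plan is to verify the defining property \eqref{eq:p fusion def2} directly: I would show that for every $x\in\R^d$, the weighted sum $\sum_{i,j} \omega_i v_j \|P_{V_{i,j}}(x)\|^{2p}$ equals a fixed constant times $\|x\|^{2p}$. The essential observation is that $V_{i,j}=f_i(V_j)$ lives inside $W_i$, so the projection onto $V_{i,j}$ factors through the projection onto $W_i$. First I would fix $x\in\R^d$ and set $x_i:=P_{W_i}(x)\in W_i$, so that $P_{V_{i,j}}(x)=P_{V_{i,j}}(x_i)$ because $V_{i,j}\subset W_i$. Since $f_i$ is an isometry from $\R^{\ell}$ onto $W_i$ and $V_{i,j}=f_i(V_j)$, pulling back through $f_i^{-1}$ gives $\|P_{V_{i,j}}(x_i)\|=\|P_{V_j}(f_i^{-1}(x_i))\|$; write $y_i:=f_i^{-1}(x_i)\in\R^{\ell}$ so that $\|y_i\|=\|x_i\|=\|P_{W_i}(x)\|$.

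Next I would apply the two tightness hypotheses in succession. For each fixed $i$, the inner sum over $j$ is exactly the $p$-fusion frame condition for $\FFF_0$ evaluated at $y_i\in\R^{\ell}$:
\begin{equation*}
\sum_{j=1}^n v_j \|P_{V_{i,j}}(x)\|^{2p}=\sum_{j=1}^n v_j \|P_{V_j}(y_i)\|^{2p}=A_0\|y_i\|^{2p}=A_0\|P_{W_i}(x)\|^{2p},
\end{equation*}
where $A_0$ is the (common) tight $p$-fusion frame constant of $\FFF_0$. Summing against the weights $\omega_i$ then invokes the tightness of $\FFF_1$ in $\R^d$:
\begin{equation*}
\sum_{i=1}^m \omega_i \sum_{j=1}^n v_j \|P_{V_{i,j}}(x)\|^{2p}=A_0\sum_{i=1}^m \omega_i \|P_{W_i}(x)\|^{2p}=A_0 A_1 \|x\|^{2p},
\end{equation*}
with $A_1$ the tight $p$-fusion frame constant of $\FFF_1$. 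This identifies $\FFF$ as a tight $p$-fusion frame with constant $A_0A_1$.

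The only genuinely delicate point is the factorization $P_{V_{i,j}}(x)=P_{V_j}(f_i^{-1}(P_{W_i}(x)))$, which I would justify carefully: because $V_{i,j}\subset W_i$, projecting $x$ first onto $W_i$ loses nothing relevant, and because $f_i$ is an isometry carrying $V_j$ onto $V_{i,j}$ it intertwines the orthogonal projections, so $P_{V_{i,j}}=f_i\circ P_{V_j}\circ f_i^{-1}$ on $W_i$. Everything else is a routine interchange of finite sums. I do not expect any real obstacle here; the result is essentially a ``Fubini'' argument in which the inner tightness produces a clean function of $P_{W_i}(x)$ and the outer tightness then collapses that to a multiple of $\|x\|^{2p}$.
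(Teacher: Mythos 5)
Your proposal is correct and follows essentially the same route as the paper: set $x_i:=P_{W_i}(x)$, use $P_{V_{i,j}}(x)=P_{V_{i,j}}(x_i)$, apply the tightness of $\FFF_0$ to the inner sum over $j$ (the paper states this directly for the isometric copy $\{V_{i,j}\}_j\subset W_i$, where you make the pullback through $f_i$ explicit), and then apply the tightness of $\FFF_1$ to obtain the constant $A_0A_1$. Your extra justification that $f_i$ intertwines the projections, $P_{V_{i,j}}=f_i\circ P_{V_j}\circ f_i^{-1}$ on $W_i$, is a harmless elaboration of a step the paper leaves implicit.
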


\begin{proof} Let $x\in \R^d$, we want to compute $\sum_{i=1}^m\sum_{j=1}^n \omega_i v_j\| P_{V_{i,j}}(x)\|^{2p}$. Because $\FFF_0$ is assumed to be a tight $p$-fusion frame, there exists $A_{\FFF_0}$ such that, 
for all $1\leq i\leq m$, for all $y\in W_i$, 
\begin{equation*}
\sum_{j=1}^n  v_j\| P_{V_{i,j}}(y)\|^{2p} =A_{\FFF_0} \| y \| ^{2p}.
\end{equation*}
Also $\FFF_1$ is a tight $p$-fusion frame so, for some $A_{\FFF_1}$, 
\begin{equation*}
\sum_{i=1}^m \omega_i \| P_{W_i}(x)\| ^{2p} =A_{\FFF_1} \|x\|^{2p}.
\end{equation*}
Let $x_i:=P_{W_i}(x)$. Then, $P_{V_{i,j}}(x)=P_{V_{i,j}}(x_i)$. So,
\begin{align*}
\sum_{i=1}^m\sum_{j=1}^n \omega_i v_j\| P_{V_{i,j}}(x)\|^{2p}  &=\sum_{i=1}^m \omega_i\Big(\sum_{j=1}^n  v_j\| P_{V_{i,j}}(x_i)\|^{2p} \Big)\\
&= \sum_{i=1}^m \omega_i A_{\FFF_0} \| x_i \| ^{2p}\\
&= A_{\FFF_0}\sum_{i=1}^n \omega_i\| P_{W_i}(x)\| ^{2p} = A_{\FFF_0}A_{\FFF_1} \|x\|^{2p}.\qedhere
\end{align*}

\end{proof}

\noindent{\bf Example:} One can take for $\FFF_0$ a tight $p$-frame in dimension $\ell$; the resulting collection $\FFF$ is a tight $p$-frame in dimension $d$ with $nm$ elements. Depending on the perspective, the latter extends a tight $p$-frame to a larger dimensional space or it refines a tight $p$-fusion frame by subdividing its subspaces into smaller subspaces.

\section{An unrestricted lower bound for the $p$-fusion frame potential}\label{sec:p pot}
In this section, we generalize the inequality \eqref{minimum FFP} for the $p$-fusion frame potential in Definition \ref{def:ffp} when the subspaces $V_j$ are not restricted to have the same dimension. To that end, we will exploit the $O(\R^d)$-decomposition of
the Hilbert space $L^2(\GG_d)$ and the structure of positive definite functions of this space. In contrast with the case of equal dimensions,  difficulties arise from the fact that the irreducible representations of $O(\R^d)$ occur in $L^2(\GG_d)$ with non trivial multiplicities.
In particular, $L:= \sum_{k=1}^{d-1} \C {\bf 1}_{\mathcal{G}_{k,d}}$,  the subspace of functions  taking constant values on $\mathcal{G}_{k,d}$ is isomorphic to $(d-1)$ copies of the trivial representation. 
So, we have to replace the single coefficient $\lambda_0=T_{k,d}(p)$ that occurred in \eqref{minimum FFP} with a matrix of size $(d-1)\times (d-1)$, i.e., let $T_d(p) := \big(T_{k,l,d}(p) \big)_{k,l}$, where $T_{k,l,d}(p)$ is as in \eqref{eq:def Tkld}. 
\begin{theorem}\label{theorem:main pot estimate}
Given positive weights $\{\omega_j\}_{j=1}^n$, let $\{V_j\}_{j=1}^n\subset \mathcal{G}_d$ and $p\geq 1$ be an integer. Define $m_k:=\sum_{\dim(V_i)=k} \omega_i$. 
If $M=(m_1,\ldots,m_{d-1})$, then 
\begin{equation}\label{eq:main finite estimate}
\FFP(\{(V_j,\omega_j)\}_{j=1}^n,p)\geq M T_d(p) M^\top.
\end{equation}
\end{theorem}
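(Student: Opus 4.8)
The plan is to mimic the proof of Theorem~\ref{Th designs}, but to account for the non-trivial multiplicities by replacing the scalar constant term $\lambda_0$ with a positive semidefinite matrix of constant terms. First I would recall that the function $s^p(V,W):=\langle P_V,P_W\rangle^p$ is positive definite on the whole $\GG_d$, being a power of the positive definite function $\langle P_V,P_W\rangle$. The essential structural input is the analogue of Bochner's theorem for $\GG_d$: every $O(\R^d)$-invariant positive definite function $F$ on $\GG_d$ decomposes as a sum over the irreducible representations occurring in $L^2(\GG_d)$, with positive semidefinite matrix coefficients indexed by the multiplicity spaces. Writing $\FFP=\sum_{i,j}\omega_i\omega_j s^p(V_i,V_j)$, the contribution of every non-trivial irreducible representation is $\geq 0$ by positive definiteness, so the bound reduces to isolating and estimating the contribution of the trivial isotypic component.

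The key point is that, unlike the equidimensional case, the trivial representation occurs in $L^2(\GG_d)$ with multiplicity $d-1$, via the subspace $L=\sum_{k=1}^{d-1}\C{\bf 1}_{\mathcal{G}_{k,d}}$. Next I would compute the projection of $s^p(\cdot,\cdot)$ onto this $(d-1)$-dimensional isotypic block. Since the functions ${\bf 1}_{\mathcal{G}_{k,d}}$ form the natural basis of $L$, the matrix coefficient of $s^p$ on this block is exactly the Gram-type matrix $\big(\int_{\mathcal{G}_{k,d}}\int_{\mathcal{G}_{l,d}} s^p(V,W)\,d\sigma_k(V)\,d\sigma_l(W)\big)_{k,l}=\big(T_{k,l,d}(p)\big)_{k,l}=T_d(p)$, using the definition \eqref{eq:def Tkld}. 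Pairing the weighted sum against the indicator functions, the portion of $\FFP$ coming from the trivial component equals precisely $M\,T_d(p)\,M^\top$, where $M=(m_1,\dots,m_{d-1})$ records how much weight each dimension carries. The contributions from all remaining (non-trivial) isotypic components are nonnegative, yielding \eqref{eq:main finite estimate}.

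The main obstacle I anticipate is making the matrix-Bochner decomposition rigorous and correctly handling the multiplicities, since $\GG_d$ is not a single homogeneous space but a disjoint union of Grassmannians of different dimensions. In particular one must verify that the isotypic projection of $s^p$ onto the trivial block is genuinely positive semidefinite as a matrix and that its entries are the $T_{k,l,d}(p)$; this amounts to checking that the bilinear form $(f,g)\mapsto\int\int f(V)\,s^p(V,W)\,g(W)\,d\sigma(V)\,d\sigma(W)$, restricted to $L$, has Gram matrix $T_d(p)$, and that cross-terms between different Grassmannians are handled by the same $O(\R^d)$-invariance argument that gives $T_{k,l,d}(p)$ its well-definedness. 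Once the decomposition is set up, the estimate itself is immediate: drop the nonnegative non-trivial terms and retain the trivial block. Equality analysis would require all non-trivial matrix coefficients of the empirical measure to vanish, a condition analogous to the cubature characterization in Theorem~\ref{Th designs}, though the statement as given asks only for the inequality.
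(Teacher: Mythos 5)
Your proposal is correct, and its skeleton coincides with the paper's: both isolate the trivial isotypic block $L\otimes L$, identify its coefficient matrix in the orthonormal basis $\{\mathbf{1}_{\mathcal{G}_{k,d}}\}$ as $T_d(p)=\big(T_{k,l,d}(p)\big)_{k,l}$ by exactly the Gram computation you describe, pair against the weighted empirical measure to get $M T_d(p) M^\top$, and then discard a positive definite remainder evaluated at the points $V_i$. The one genuine difference is how the positivity of that remainder is established. You invoke a full matrix-valued Bochner decomposition of the invariant kernel $s^p$ over all irreducibles occurring in $L^2(\mathcal{G}_d)$, with positive semidefinite coefficient matrices on the multiplicity spaces, and you rightly flag this as the step needing rigor. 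The paper instead proves only the minimal splitting it needs (Lemma \ref{lemma:for theorem}): $s^p=F_0+F_1$ with $F_0\in L\otimes L$ and $F_1\in L^\perp\otimes L^\perp$, where the cross-blocks $L\otimes L^\perp$ and $L^\perp\otimes L$ are killed by averaging over $O(\R^d)$ --- for $f_0\in L$ and $f_1\in L^\perp$ the inner average $\int_{O(\R^d)}\overline{f_1(gW)}\,dg$ is proportional to $\int_{\mathcal{G}_{k,d}}\overline{f_1}\,d\sigma_k=0$ --- and the positive definiteness of both $F_0$ and $F_1$ then follows by simply restricting the quadratic form of $s^p$ to test functions in $L$, respectively $L^\perp$. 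This buys self-containedness: no convergence or multiplicity bookkeeping for the full isotypic decomposition, and since $F_0$ (hence $F_1=s^p-F_0$) is continuous, $L^2$-positive-definiteness upgrades to the pointwise, finite-sum version actually applied at the $V_i$, an equivalence the paper records just before the lemma. What your heavier route buys in exchange is structural information beyond the inequality --- block-by-block vanishing conditions that would feed an equality or cubature-type characterization in the spirit of Theorem \ref{Th designs} --- but for the stated bound it is more than is needed, and to make it complete you would either have to prove the matrix Bochner statement for the non-homogeneous union $\mathcal{G}_d=\cup_{k}\mathcal{G}_{k,d}$ or retreat to precisely the paper's two-block lemma.
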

We recall that a function $F\in L^2(\mathcal{G}_d\times \mathcal{G}_d)$ is said to be \emph{positive definite} if, for all $f\in L^2(\mathcal{G}_d)$, 
\begin{equation*}
\int_{\mathcal{G}_{d}} \int_{\mathcal{G}_{d}} F(V,W) f(V) \overline{f(W)}d\sigma(V) d\sigma(W)\geq 0. 
\end{equation*}
For a continuous function $F:\mathcal{G}_d\times \mathcal{G}_d\rightarrow \C$, it amounts to ask that, for all integers $s\geq 1$, and $(W_1,\ldots, W_s)\in (\mathcal{G}_d)^s$, the matrix $\big( F(W_i,W_j) \big)_{i,j}$ is hermitian positive semi-definite. In order to prove Theorem \ref{theorem:main pot estimate}, we need some preparation with the following lemma. We  use the tensor notation $L\otimes L = \{(V,W)\mapsto f(V)g(W) : f,g\in L\}$. 
\begin{lemma}\label{lemma:for theorem}
Let $s^p(V,W):=\langle P_V,P_W \rangle^p$. Then, $s^p=F_0+F_1$, with $F_0\in L\otimes L$, $F_1\in L^{\perp}\otimes L^\perp$, and $F_0$, $F_1$ are positive definite functions.
\end{lemma}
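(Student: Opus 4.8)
The plan is to recast the desired splitting operator-theoretically. To the continuous symmetric kernel $s^p$ I associate the integral operator $\mathcal{T}$ on $L^2(\mathcal{G}_d)$ defined by $(\mathcal{T}f)(V)=\int_{\mathcal{G}_d}s^p(V,W)f(W)\,d\sigma(W)$. Since $s^p$ is real and symmetric, $\mathcal{T}$ is self-adjoint, and $s^p$ is positive definite in the sense recalled before the lemma if and only if $\mathcal{T}\succeq 0$. Moreover the four summands of the orthogonal splitting
\[
L^2(\mathcal{G}_d\times\mathcal{G}_d)=(L\otimes L)\oplus(L\otimes L^\perp)\oplus(L^\perp\otimes L)\oplus(L^\perp\otimes L^\perp)
\]
correspond exactly to the four blocks of $\mathcal{T}$ relative to $L^2(\mathcal{G}_d)=L\oplus L^\perp$, where $\Pi_L,\Pi_{L^\perp}$ denote the orthogonal projections. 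In these terms the lemma asserts precisely that $\mathcal{T}$ is block-diagonal for this splitting (the two cross components of $s^p$ vanish) and that each diagonal block is positive. First I would record that $\mathcal{T}\succeq 0$: the kernel $s(V,W)=\langle P_V,P_W\rangle=\trace(P_VP_W)$ is a Gram kernel, hence positive definite, and $s^p$ is its $p$-th pointwise power, so it is positive definite as well, exactly as in the proof of Theorem~\ref{Th designs}.

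The heart of the argument is the single inclusion $\mathcal{T}(L)\subseteq L$, which I would obtain from $O(\R^d)$-invariance alone. For the generator ${\bf 1}_{\mathcal{G}_{k,d}}$ of $L$ and any $V\in\mathcal{G}_{l,d}$,
\[
(\mathcal{T}{\bf 1}_{\mathcal{G}_{k,d}})(V)=\int_{\mathcal{G}_{k,d}}s^p(V,W)\,d\sigma(W),
\]
which by the invariance of $\sigma$ is constant on each $\mathcal{G}_{l,d}$, proportional to $T_{k,l,d}(p)$ as in \eqref{eq:def Tkld}; hence $\mathcal{T}{\bf 1}_{\mathcal{G}_{k,d}}\in L$. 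Conceptually, $L$ is the isotypic component of the trivial representation in $L^2(\mathcal{G}_d)=\bigoplus_k L^2(\mathcal{G}_{k,d})$, and $\mathcal{T}$, commuting with the $O(\R^d)$-action, preserves it by Schur's lemma. Since $\mathcal{T}$ is self-adjoint and preserves $L$, it also preserves $L^\perp$; therefore the off-diagonal blocks vanish and, setting $F_0:=(\Pi_L\otimes\Pi_L)s^p$ and $F_1:=(\Pi_{L^\perp}\otimes\Pi_{L^\perp})s^p$, we get $s^p=F_0+F_1$ with $F_0\in L\otimes L$ and $F_1\in L^\perp\otimes L^\perp$.

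Finally I would deduce positive definiteness of each piece from the block structure. Writing $\mathcal{T}_{F_0},\mathcal{T}_{F_1}$ for the integral operators of $F_0,F_1$, the vanishing of the cross blocks gives $\langle\mathcal{T}f,f\rangle=\langle\mathcal{T}_{F_0}f_L,f_L\rangle+\langle\mathcal{T}_{F_1}f_{L^\perp},f_{L^\perp}\rangle$ for every $f=f_L+f_{L^\perp}$. Testing against $f\in L$ and against $f\in L^\perp$ separately shows that $\mathcal{T}\succeq 0$ forces both $\mathcal{T}_{F_0}\succeq 0$ and $\mathcal{T}_{F_1}\succeq 0$, i.e.\ $F_0$ and $F_1$ are positive definite. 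The one genuinely load-bearing step is the vanishing of the cross terms, that is $\mathcal{T}(L)\subseteq L$: it is exactly here that the $O(\R^d)$-invariance of $s^p$, equivalently the fact that $L$ is precisely the trivial-isotypic subspace, is indispensable, whereas the positivity of the two diagonal blocks is then automatic.
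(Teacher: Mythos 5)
Your argument is correct, and it reaches the paper's conclusion by a dual, operator-side route rather than the paper's direct kernel computation. The paper kills the cross components of $s^p$ head-on: it pairs $s^p$ against $f_0(V)\overline{f_1(W)}$ with $f_0\in L$, $f_1\in L^\perp$, inserts an average over $O(\R^d)$ in the $W$-variable, and uses that $\int_{O(\R^d)}\overline{f_1(gW)}\,dg=\gamma_k\int_{\Gkd}\overline{f_1}\,d\sigma_k=0$; the positive definiteness of $F_0$ and $F_1$ is then verified by an explicit computation with $f=f_0+f_1$. You instead prove the single inclusion $\mathcal{T}(L)\subseteq L$ (transitivity of $O(\R^d)$ on each $\mathcal{G}_{l,d}$, together with invariance of $s^p$ and $\sigma_k$, makes $\mathcal{T}\1_{\Gkd}$ constant on every component) and let self-adjointness of $\mathcal{T}$ annihilate the other cross block, after which block-positivity is formal. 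The invariance input is identical in both proofs --- $L$ is exactly the trivial isotypic subspace --- but your packaging buys two things: it produces the explicit expansion $F_0=\sum_{k,l}T_{k,l,d}(p)\,\1_{\Gkd}\otimes\1_{\mathcal{G}_{l,d}}$ as a byproduct, which the paper re-derives separately at the start of the proof of Theorem \ref{theorem:main pot estimate}, and it makes the positivity of the two diagonal pieces automatic rather than computed. Two steps you use silently deserve a sentence each if this is written up: the identification of the four tensor summands of $L^2(\mathcal{G}_d\times\mathcal{G}_d)$ with the four operator blocks is the Hilbert--Schmidt correspondence and involves a complex conjugation in the second variable, so it requires $L$ (hence $L^\perp$) to be stable under conjugation --- true here because $L$ is spanned by the real functions $\1_{\Gkd}$; and your Gram-kernel/Schur-product argument yields positive semidefiniteness of the matrices $(s^p(W_i,W_j))_{i,j}$, which matches the integral notion in which the lemma is stated only via the equivalence for continuous kernels that the paper records just before the lemma. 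Neither point is a gap, only a compression.
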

\begin{proof}
The function $s^p(V,W)$ is contained in $L^2(\mathcal{G}_d)\otimes L^2(\mathcal{G}_d)= (L\oplus L^\perp) \otimes (L\oplus L^\perp)$. Let $f_0\in L$ and $f_1\in L^{\perp}$. We observe that $F$ and $f_0$ are $O(\R^d)$-invariant. We have
\begin{align*}
\int_{\GG_d}\int_{\GG_d}
s^p(V,W)f_0(V)\overline{f_1(W)}&d\sigma(V)d\sigma(W)\\
& \hspace{-1.5cm}=\int_{O(\R^d)}\int_{\GG_d}\int_{\GG_d}
s^p(V,W)f_0(V)\overline{f_1(gW)}d\sigma(V)d\sigma(W) dg\\
&\hspace{-1.5cm} =\int_{\GG_d}\int_{\GG_d} s^p(V,W)f_0(V)\Big(\int_{O(\R^d)}\overline{f_1(gW)}dg\Big)d\sigma(V)d\sigma(W).
\end{align*}
But, for $W\in \GG_{k,d}$, since $\GG_{k,d}$ is $O(\R^d)$-homogeneous,
\begin{equation*}
\int_{O(\R^d)}\overline{f_1(gW)}dg =\gamma_k \int_{\GG_{k,d}} \overline{f_1(W)}d\sigma_k(W)
\end{equation*}
for some constant $\gamma_k$. Because $f_1\in L^{\perp}$, the right
hand side is zero. Therefore, we have $s^p=F_0+F_1\in (L\otimes L) \oplus (L^\perp\otimes L^\perp)$.

The functions $s^p(V,W)$ are positive definite on $\GG_d$. So, for all $f=f_0+f_1\in L\oplus L^{\perp}$, we obtain
\begin{align*}
\int_{\GG_d}\int_{\GG_d}
F_0(V,W)f(V)\overline{f(W)}&d\sigma(V)d\sigma(W)\\
&=\int_{\GG_d}\int_{\GG_d}F_0(V,W)f_0(V)\overline{f_0(W)}d\sigma(V)d\sigma(W)\\
&=\int_{\GG_d}\int_{\GG_d}s^p(V,W)f_0(V)\overline{f_0(W)}d\sigma(V)d\sigma(W)\geq 0.
\end{align*}
Hence, $F_0$ is positive definite. A similar argument shows that $F_1$ is also positive definite.
\end{proof}

\begin{proof}[Proof of Theorem \ref{theorem:main pot estimate}]
Since the set of functions $\{\1_{\GG_{k,d}}(V)\1_{\GG_{l,d}}(W), 1\leq k,l\leq d-1\}$ 
is an orthonormal basis of $L\otimes L$, we have 
\begin{equation*}
F_0(V,W)=\sum_{k,l} T_{k,l,d}(p) \1_{\GG_{k,d}}(V)\1_{\GG_{l,d}}(W).
\end{equation*}
Since $s^p-F_0$ is positive definite, we obtain 
$
\sum_{i,j}\omega_i(s^p(V_i,V_j)-F_0(V_i,V_j))\omega_j \geq 0 
$, 
and this yields
\begin{align*}
\sum_{i,j}\omega_i\omega_j\langle P_{V_i}, P_{V_j} \rangle^p & \geq \sum_{i,j} \sum_{k,l} \omega_i\omega_jT_{k,l,d}(p) {\bf 1}_{\mathcal{G}_{k,d}}(V_i){\bf 1}_{\mathcal{G}_{l,d}}(V_j)\\
& =  \sum_{k,l} T_{k,l,d}(p)\sum_{i,j} \omega_i\omega_j{\bf 1}_{\mathcal{G}_{k,d}}(V_i){\bf 1}_{\mathcal{G}_{l,d}}(V_j)\\
& =  \sum_{k,l}  m_k m_lT_{k,l,d}(p) = M T_d(p) M^\top.
\end{align*}
\end{proof}

\begin{remark}\label{label:remark}
\begin{enumerate}

\item The measures $d\sigma_l d\sigma_k$ induce  measures $d\lambda_{l,k}$ on $[0,1]^l$ for the variables  $y_i=\cos^2(\theta_i(V,W))$,  which are computed in \cite{James:1974aa}. Up to a multiplicative constant, one has, for $l\leq k\leq d/2$, 
\begin{equation*}
d\lambda_{l,k}=\int_{[0,1]^l} \prod_{1\leq i<j\leq l} |y_i-y_j|\prod_{i=1}^l
  y_i^{(k-l-1)/2}(1-y_i)^{(d-k-l-1)/2}dy_i.
\end{equation*}
Note that $\lambda_{1,k}$ has already occurred in the proof of Theorem \ref{Th weak designs}.
The
\emph{zonal spherical intertwining polynomials}  for the Grassmann spaces $\GG_{l,d}$ and $\GG_{k,d}$, denoted $P_{\mu}^{l,k}(y_1,\dots,y_k)$,
 are symmetric   polynomials, and are orthogonal  for the measure $\lambda_{l,k}$ (\cite{James:1974aa}).  They are indexed by the partitions $\mu$ of length at most $l$. These polynomials already occurred in Section \ref{sec:equal dim} for $(l,k)=(k,k)$ and for $(l,k)=(1,k)$. 

Since $\langle P_V,P_W \rangle^p=(y_1+\dots +y_k)^p$, the number
  $T_{l,k,d}(p)$ 
corresponds to the constant term in the expression of
$(y_1+\dots+y_k)^p$ as a linear combination of  these polynomials. For example,  $P_{(1)}^{l,k}=(y_1+\dots
+y_l)-lk/d$ (up to a multiplicative factor) and thus
$T_{l,k,d}(1)=lk/d$. Knowledge of these polynomials allows to give an explicit expression for
$T_{l,k,d}(p)$. We observe  that, because these polynomials have rational
coefficients, $T_{l,k,d}(p)$
is a rational function of $l,k,p$. 
\item For $p=1$, we have
\begin{align*}
\sum_{1\leq l,k\leq d-1}T_{l,k,d}(1) m_lm_k&= 
\sum_{1\leq l,k\leq d-1}(lk/d) m_lm_k\\
&=\frac{1}{d} \big(\sum_{1\leq k\leq d-1}  k m_k\big)^2\\
&=\frac{1}{d} \big(\sum_{j=1}^n \omega_j\dim(V_j)\big)^2.
\end{align*}
Thus, we recover the lower bound for $p=1$ in Theorem \ref{prop:generalized simplex bound}.
\item For $p\geq 1$, if $\{V_j\}_{j=1}^n\subset \Gkd$, then $M=(0,\dots, m_k=\sum_{j=1}^n\omega_j,0,\dots)$ and $MT(p)M^\top=\big(\sum_{j=1}^n\omega_j\big)^2T_{k,d}(p)$ so that we recover \eqref{minimum FFP}.

\end{enumerate}
\end{remark}

\section*{Acknowledgements} 
The authors would like to thank the referees for their suggestions that improved the presentation of this paper. 
M.~E.~is supported by the NIH/DFG Research Career Transition Awards Program (EH 405/1-1/575910).

%
 
\end{document}